\documentclass[11pt,reqno]{amsart}
  
\usepackage{amssymb}
\usepackage{amsthm}
\usepackage{xspace}
\usepackage{amsmath}
\usepackage{setspace}
\usepackage{amscd}
\usepackage{tikz}
\usepackage{setspace}
\usepackage{enumerate}
\usepackage{graphicx}
\makeatletter
\@namedef{subjclassname@2010}{%
  \textup{2010} Mathematics Subject Classification}
\makeatother

\newtheorem{theorem}{Theorem}[section]
\newtheorem{lemma}[theorem]{Lemma}
\newtheorem{proposition}[theorem]{Proposition}
\theoremstyle{definition}
\newtheorem{definition}[theorem]{Definition}

\theoremstyle{remark}

\numberwithin{equation}{section}
 
%%%%%%%%%%

%%%%%%%%%%%%%%%%%%%%%%%%%%%

\newcommand{\K}{\mbox{$\mathbb{K}$}}

\newcommand{\Z}{\mbox{$\mathbb{Z}$}}

\newcommand{\Q}{\mbox{$\mathbb{Q}$}}
\newcommand{\F}{\mbox{$\mathbb{F}$}}

\begin{document}

\title{ Prime polynomial values of \\ quadratic functions in short intervals}
%\author{Soumya Das}
\author{Sushma Palimar}
%\author{SUSHMA PALIMAR}
%    Address of record for the research reported here
\address{%Algorithmic Algebra Laboratory, 
Department of Mathematics,\\ 
Indian Institute of  Science,\\
Bangalore, Karnataka, India. }

 \email{psushma@iisc.ac.in, sushmapalimar@gmail.com.}
\subjclass[2010]{11T55(primary),11P55,11N37.}
\begin{abstract}{In this paper we establish  the function field analogue
of Bateman-Horn conjecture in short interval  in the limit of a large finite field. Hence
 we start with counting prime polynomials generated by primitive
quadratic functions in short intervals. 
 To this end we further
 work out on function  field  analogs  of  cancellation of     Mobius sums and  its correlations(Chowla type sums) 
 and confirm that square root cancellation in Mobius sums is equivalent to square root cancellation in Chowla type  sums.
} %We prove an asymptotic formula for the number of simultaneous prime values of $r$ 
%quadratic functions, in the limit of a large finite field }
\end{abstract}
\maketitle

\section{Introduction}
The well known conjecture of  Hardy-Littlewood  and Bateman-Horn predicts how often polynomials 
take prime values. For example, choose $f_{1}(T)$,..., $f_{r}(T)$  to be non-associate  irreducible polynomials in $\Z[T]$,
with leading coefficient of each
$f_{i}>0$ and suppose that for each prime  $p$ there exists $ n\in \Z$ such that 
$p\nmid f_{1}(n)\cdot\cdot\cdot f_{r}(n)$ for all integers $n$. 
Set $\pi_{f_1,f_2,...,f_r}$ as the number of positive integers $n\leq x$ such that 
$ f_{1}(n),...,f_{r}(n) \text{ are all primes}$. \begin{equation}\label{eqNo.1}
 \pi_{f_1,f_2,...f_r}(x):= \#\{1\leq n\leq x: f_{1}(n),...,f_{r}(n) \text{ are all primes} \}\end{equation}
 \[ \sim\frac{C (f_{1},f_{2},...,f_{r})}{   \prod\limits_{i=1}^{r} deg f_i}\frac{x}{(log x)^{r} } \]
where\[ C (f_{1},f_{2},...,f_{r}):=\underset{p \text{ prime}}{\bf{\prod}}\frac{1-\nu(p)/p}{({1-1/p})^{m}}, \]
$\nu(p)$ being the number of  solutions to $f_{1}(T)...f_{r}(T)\equiv {0}\pmod{p}$ in $\Z/p.$
 The product,  $C (f_{1},f_{2},...,f_{r}) $ is called Hardy-Littlewood constant associated to ${f_{1}(n),...,f_{r}(n)}$ \cite{kc}. 
 The only proved case of Bateman-Horn conjecture is the case of a single linear polynomial, which is the Dirichlet's theorem on
 primes in arithmetic progressions \cite{sb}.  Bateman Horn conjecture reduces  to the special case,
 Hardy-Littlewood  twin primes conjecture on the density of twin primes, whenever $r=2, f_1(T)=T; f_2(T)=T+2$, in (\ref{eqNo.1}) 
 according to which the number of twin primes pairs less than $x$ is:
 \[\pi_{2}(x)\sim 2\underset{p\geq 3}\Pi \frac{p(p-2)}{(p-1)^{2}}\frac{x}{(log x)^{2}}
 \]
 We derive the function field analog of Bateman-Horn conjecture  in the limit of large finite field in short interval. 
 \subsection*{Polynomial Ring and Prime Polynomials}
Let $\F_{q}[t]$ be the ring of polynomials over the finite field $\F_{q}$ with $q$ elements, $q=p^{\nu}, p: \text{ prime}.$
 Let  $\mathcal{P}_{n}=\{f\in \F_{q}[t]| \mathrm{deg} f=n \}$ be the set of all polynomials of degree $n$ and 
 $\mathcal{M}_{n}\subset\mathcal{P}_{n}$ be the subset of monic polynomials of degree $n$ over $\F_{q}$.
 The polynomial ring $\F_q[t]$ over a finite field $\F_q$ shares several properties with the ring of integers and the analogies between
 number field and function fields are fundamental in number theory. For instance, as quantitative aspect of this analogy,
 we have the Prime Polynomial Theorem.

 The prime polynomial theorem states that, the number $\pi_{q}(n)$ of monic irreducible polynomials of degree $n$ is \[
\pi_{q}(n)=\frac{q^{n}}{n}+ O\big(\frac{q^{n/2}}{n}\big), \quad q^{n}\rightarrow \infty.\]
The prime polynomial theorem for arithmetic progression  asserts,
given a polynomial modulus $Q\in \F_{q}[t]$, of positive degree and a polynomial $A$, coprime to $Q$, the number 
$ \pi_{q}(n; Q, A)$ of primes $P\equiv A\pmod Q , P\in \mathcal{M}_{n} $ satisfies,
\[ \pi_{q}(n; Q, A)=\frac{\pi_{q}(n)}{\Phi(Q)}+O(\mathrm{deg }Q. q^{n/2}).\]
where $\Phi_(Q)$ is the number of coprime residues modulo $Q$. For $q\rightarrow \infty,$ the
main term is dominant as long as $\mathrm{deg}Q<n/2.$
\subsection*{} In  \cite{lbs}
Bary-Soroker considered the function field analogue of  Hardy - Littlewood prime tuple conjecture,
in the limit of a large finite field, for functions, $F_i=f+h_i$,
 $h_i\in \mathbb{F}_q[t]$, $  deg(f)>deg(h_i), \text{ for } i=1,2,...,n$. This result was established   previously by
 Bender and Pollack \cite{AP} for the case $i=2$.
 %------------------
\subsection{Prime polynomials in short interval}
 Some of the salient problems of prime number theory deals with the study of distribution of primes 
 in short interval and arithmetic progression.
 To set up an equivalent problem for the polynomial ring $\F_{q}[t]$, we define short interval in function fields. Here we follow \cite{KZ} for notations.
 For a nonzero polynomial, $f\in \F_{q}[t]$, we define its norm by \(||f||:= {\#F_q[t]}/{(f)}=q^{deg f}\).
 Given a monic polynomial $f\in \mathcal{M}_{n}$ of degree $n$, and $h<n$, ``short intervals" around $f\in \mathcal{M}_{n}$ 
 of diameter $q^{h}$
 is the set
\begin{equation}I(f,h):=\{g\in\F_{q}[t]:\mathrm{deg}(f-g)\leq h\}=\{g\in \F_{q}[t]:||f-g||\leq q^{h}\} =f+ \mathcal{P}_{\leq h}\end{equation}
Thus, $I(f,h)$ is of the form,   $f+\sum_{i=0}^{h}a_it^i$, where ${\mathbf {a}}=(a_0,a_1,...,a_h)$ are
  algebraically independent variables over $\F_{q}$.
The number of polynomials in this interval is  \[H:=\#I(f;h)=q^{h+1}.\] 
For $h=n-1, I(f,n-1)=\mathcal{M}_{n}$ is the set of all polynomials of degree $n$.
For $h<n$, if $||f-g||\leq q^{h}$, then $f$ is monic if and only if $g$ is monic.
Bank, Bary-Soroker and Rosenzweig \cite{Baroli}
  obtained the  result on counting prime polynomials in the short  interval $I(A,h)$ 
  for the primitive linear function $f(t)+g(t)x$.
 In \cite{Baro}  the function field analogue of Hardy - Littlewood prime tuple conjecture on  these primitive linear functions is resolved 
 in short interval case.
  
  \subsection*{Counting Prime polynomials and HIT}
To establish the  function field analogue of  counting prime polynomials in \textit{short interval}  we start with irreducible quadratic function
$F(x,t)=f(t)+x^{2}\cdot g(t) \in \F_{q}[t][x]$ with following properties. Let
$f, g\in \F_{q}[t]$ be non zero, relatively prime polynomials,
$g(t)$ a monic polynomial and   the product 
$f\cdot g $  not a square polynomial with $\mathrm{deg}f<\mathrm{deg}g$. Hence, by the choice of $f\text{ and }g$, it is clear that, 
the function $F(x,t)= f(t)+x^2\cdot g(t)$ is irreducible in $x$. The first derivative of $F(x,t)$ is $2xg(t)\neq0$, implies
the function $ f(t)+g(t)x^2$, as a polynomial in $x$ is separable over $\F_{q}[t]$.
\subsection*{}
The short interval $I(p,m)$ defined as, $\mathrm{h}=p+\mathcal{P}_{\leq m}, \mathrm{deg}p>m,
\text{ is given by } $\begin{equation}\mathrm{h}(t)=p(t)+\sum_{i=0}^{m}a_it^i\end{equation} where ${\mathbf{a}}=(a_0,a_1,...,a_m)$ are
  algebraically independent variables over $\F_{q}$.
 ``Technically, the problem of   finding prime polynomials in short interval is 
  to find the number of tuples $\mathbf{A}=\{A_0,...,A_m\}\in \F_{q}^{m+1}$ 
 for which $F(\mathbf{A},t)$ is irreducible in $\F_{q}[t]$."
  The key tool used is the Hilbert Irreducibility Theorem, which answers, does 
the specialization $\mathbf{a}\mapsto\mathbf{A}\in\F_{q}^{m+1}$ preserve the irreducibility?
We have,\begin{equation}\label{maineq}
         F(x,t)=f(t)+x^2g(t) \text{ for } f(t),g(t)\in \F_{q}[t]
        \end{equation}
  Then, 
\[F(\mathrm{h},t)=f(t)+g(t)\mathrm{h}^2 = f(t)+g(t)\Big\{p(t)+\sum\limits_{i=0}^{m}a_it^i\Big\}^{2}\]
  
therefore,\begin{equation} \label{maineq1}F(\mathbf{a},t)= \tilde f(t)+g(t)\Big\{\big(\sum\limits_{j=0}^{m}a_jt^j\big)^2+2p(t)\sum\limits_{j=0}^{m}a_jt^j\Big\}\end{equation}
       where  \[\tilde f(t)=f(t)+g(t)p(t)^2 \]and\[ n=\mathrm{deg}F=\mathrm{deg}\tilde f=s+2k>\mathrm{deg}g\]                
 Under the above setup, we get an asymptotic for: 
 \begin{equation} \pi_{q}(I(p,m))=\#\{ h:=p(t)+\sum\limits_{j=0}^{m}A_jt^j| F(\mathbf{A},t) \text{ is irreducible in } \F_{q}[t]\}\end{equation}
and we have the following theorem.
\begin{theorem}\label{th1}
Let $n$ be a fixed positive integer and $q$ an odd prime power.
%$1\leq \mathrm{deg}g\leq n-5.$.
Then we have $\pi_{q}(I(p,m))=\frac{q^{m+1}}{n}+O_{n}(q^{m+\frac{1}{2}})$
\end{theorem}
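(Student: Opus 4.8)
\emph{The plan is to} recast the irreducibility count as a statement about the monodromy (Galois) group of the family and then feed it into an explicit, quantitative form of the Hilbert Irreducibility Theorem over finite fields. Write
$$\Psi_{\mathbf a}(t) = F(h(\mathbf a),t) = \tilde f(t) + g(t)\Big\{\big(\textstyle\sum_{j} a_j t^j\big)^2 + 2p(t)\sum_j a_j t^j\Big\},$$
a polynomial of degree $n$ in $t$ whose coefficients are polynomials in $\mathbf a=(a_0,\dots,a_m)$; the specialisation $\mathbf a\mapsto\mathbf A\in\F_q^{m+1}$ recovers exactly the polynomials we must test. The specialised polynomial $\Psi_{\mathbf A}$ is irreducible over $\F_q$ precisely when the Frobenius at $\mathbf A$ acts as an $n$-cycle on the $n$ roots of $\Psi$. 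Hence, writing $G\subseteq S_n$ for the arithmetic monodromy group of $\Psi$ over $\F_q(\mathbf a)$ and $G^{\mathrm{geo}}\subseteq G$ for its geometric monodromy over $\overline{\F_q}(\mathbf a)$, the Lang--Weil/Deligne estimates yield
$$\pi_q(I(p,m)) = \frac{\#\{\sigma\in G:\sigma\text{ is an }n\text{-cycle}\}}{\#G}\,q^{m+1} + O_n(q^{m+1/2}),$$
the error being the square-root cancellation supplied by the Riemann Hypothesis over finite fields. The whole theorem thus reduces to identifying $G$ and $G^{\mathrm{geo}}$.

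The heart of the argument is to prove $G=G^{\mathrm{geo}}=S_n$. Since restricting the parameter $\mathbf a$ to a subvariety can only shrink the monodromy to a subgroup, \emph{it suffices to exhibit one one-parameter subfamily whose geometric monodromy is already the full $S_n$}; the natural choice is $a_1=\dots=a_m=0$ with $c=a_0$ free, giving $\Psi_c(t)=f(t)+g(t)\bigl(p(t)+c\bigr)^2$. I would study the affine plane curve $X:\ g(t)(c+p(t))^2+f(t)=0$ with its degree-$n$ projection $\rho:X\to\A^1_c$, $(c,t)\mapsto c$, whose monodromy is precisely the geometric Galois group of $\Psi_c$ over $\overline{\F_q}(c)$. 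Solving $(c+p(t))^2=-f(t)/g(t)$ exhibits $X$ as a double cover of the $t$-line, and shows $X$ is irreducible iff $-fg$ is not a square in $\overline{\F_q}(t)$; this is guaranteed by the hypothesis that $fg$ is not a square (over $\overline{\F_q}$ one has $-1$ a square) together with $\gcd(f,g)=1$. Irreducibility of $X$ is equivalent to transitivity of the monodromy of $\rho$.

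To upgrade transitivity to all of $S_n$ I would analyse the branch locus of $\rho$, namely the values of $c$ at which $\Psi_c$ acquires a multiple root in $t$, i.e.\ the zeros of $\mathrm{disc}_t(\Psi_c)$ viewed as a polynomial in $c$. Using $\gcd(f,g)=1$ and $\deg f<\deg g$ I would check that at each such $c$ exactly two roots collide, and collide simply, so that every inertia generator of $\rho$ is a transposition. A transitive subgroup of $S_n$ generated by transpositions is the full symmetric group, giving $G^{\mathrm{geo}}=S_n$; and since $G^{\mathrm{geo}}\subseteq G\subseteq S_n$ we also get $G=S_n$, so that in the large-$q$ limit there is no constant-field (cyclotomic) obstruction. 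Finally, the proportion of $n$-cycles in $S_n$ is $(n-1)!/n!=1/n$, and substituting this into the Chebotarev formula above gives $\pi_q(I(p,m))=q^{m+1}/n+O_n(q^{m+1/2})$, as claimed.

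\emph{The main obstacle} is the branch-point analysis: proving that $\mathrm{disc}_t(\Psi_c)$ has only simple zeros and that the corresponding root collisions are simple (the Morse condition) for the \emph{fixed} data $f,g,p$. If the one-parameter slice happens to degenerate for special $f,g,p$ (coincident branch points, or a higher-order collision producing a cycle longer than a transposition), then rather than restricting to the $a_0$-line one must deform within the full $(m+1)$-parameter family, using the remaining variables $a_1,\dots,a_m$ to separate the branch points and restore the transpositions, thereby still forcing $G^{\mathrm{geo}}=S_n$.
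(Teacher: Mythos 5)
Your overall architecture is sound and its first half coincides with the paper's: both proofs reduce Theorem \ref{th1} to showing that the geometric Galois group of $F(\mathbf{a},t)$ over $k(\mathbf{a})$ is $S_n$ and then invoke an explicit Chebotarev/Lang--Weil count (the paper outsources this to Proposition \ref{Theorem2}, i.e.\ Lemma 2.1 of \cite{lbs}), and both obtain transitivity from the identical computation: viewing $F$ as a quadratic in $a_0$ (your $c$) with discriminant $-4f(t)g(t)$, not a square since $fg$ is not a square and $\gcd(f,g)=1$. Where you genuinely diverge is the endgame. The paper keeps all $m+1$ parameters in play and proves (i) double transitivity, by specializing $a_0=0$, deleting a root of $\tilde f$, and repeating the discriminant trick in the variable $a_1$, and (ii) the existence of a single transposition, by specializing $a_m=0$, verifying a Morse condition, and applying Osada's inertia lemma (Lemma \ref{lem2}); it then concludes via ``doubly transitive $+$ one transposition $\Rightarrow S_n$.'' You instead collapse to the one-parameter slice $a_1=\cdots=a_m=0$, read the cover as a curve over the $c$-line, and aim for ``transitive $+$ generated by transpositions $\Rightarrow S_n$,'' which dispenses with double transitivity but requires \emph{every} finite branch point of that slice to be simple. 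Two cautions. First, the slice argument needs the standard remark that the geometric monodromy of a cover of $\mathbb{P}^1_c$ is generated by the inertia at \emph{all} branch points including $c=\infty$ (where your cover degenerates, since the roots escape as $c\to\infty$); the product-one relation in the fundamental group is what lets you drop the inertia at infinity and keep only the transpositions. Second, the Morse condition for the fixed slice $\Psi_c=f+g(p+c)^2$ is exactly the step you flag as the obstacle, and it is a real one: for particular $f,g,p$ the discriminant $\mathrm{disc}_t(\Psi_c)$ can have multiple roots or higher-order collisions, in which case your one-parameter family does not by itself yield $S_n$. Your proposed repair --- deform using the remaining parameters $a_1,\dots,a_m$ to separate branch points --- is correct and is in effect what the paper does by running the Morse argument on the $m$-parameter family $F|_{a_m=0}$ rather than on a line; so the proposal is viable, but as written the decisive transposition-producing step is asserted rather than proved, to roughly the same degree of informality as the paper's own ``a small calculation shows.''
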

%-----------------------------
One of the basic forms of HIT states
that, if $f(x_1,x_2,...,x_r,T_1,T_2,...,T_s)\in \Q[x_1,...,x_r,T_1,...,T_s]$  is irreducible, then there exists a specialization 
$\mathbf{(t)}=(t_1,...,t_s)$
such that $f(x_1,...,x_r)=f(x_1,...,x_r,t_1,...,t_s)$ as a rational polynomial in $x_1,...,x_r$ is irreducible over $\Q[x_1,...,x_r]$. If $r=1$,
  consider $f$ as a polynomial in $x$ over the rational function field
  $L=\Q(T_1,...,T_s)$, having roots $\alpha_1, . . . , \alpha_n$ in the algebraic closure $\bar L.$
  If  $f$ is irreducible and separable, then these roots are distinct, and we can consider the Galois group $G$ of $f$
  over $L$ as a subgroup of the symmetric group $S_n$. Then there exists a specialisation
 $ \mathbf{t}\in \Q^{s}$ such that the resulting rational polynomial in $x$ still is irreducible and has Galois group
 $G$ over $\Q$. In fact, if $\mathbf{t}$ is chosen in such a way that the specialized polynomial in $x$ 
 still is of degree $n$, and separable, then its Galois group $G_\mathbf{t}$ over $\Q$ is a subgroup of $G$
 (well-defined up to conjugation)  
  and it turns out that ‘almost all’ specializations for $\mathbf{t}$ preserve the Galois group, i.e. $G_\mathbf{t}=G$. Hence, we start by computing the 
  Galois group of $F(\mathbf{a},t)$ over $\bar \F_{q}(\mathbf{a})$. 
  In the sequel let $k=\bar \F_q$ and we prove $\mathrm{Gal}(F({\mathbf{a}},t),k({\mathbf{a}})) = S_{n}$  in section \ref{gal}.
   %-----------------------------------------------------------------------------
\section{$\mathrm{Gal}(F({\mathbf{a}},t),k({\mathbf{a}})) = S_{n}$}
\label{gal}
 
\begin{theorem}\label{th2}
   Let  $k=\bar \F_{q},\, q \text{ an odd prime power and } {\mathbf{ a}}=(a_0,a_1,...,a_m)$ be $m+1$ tuple of varibles, 
 $m\geq 2$. 
 Then, $\mathrm{Gal}(F({\mathbf{a}},t),k({\mathbf{a}}))$ = $S_{n}$.
 \end{theorem}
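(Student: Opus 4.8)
The plan is to show that $G:=\mathrm{Gal}(F(\mathbf{a},t),k(\mathbf{a}))$ is transitive and then to realise it as the monodromy group of a degree-$n$ cover of $\mathbb{P}^{1}$ that is generated by transpositions. First I would prove that $F(\mathbf{a},t)$ is irreducible over $k(\mathbf{a})$, so that $G$ is a transitive subgroup of $S_{n}$. Completing the square in (\ref{maineq1}) gives $F=g(t)\,(A+p(t))^{2}+f(t)$ with $A=\sum_{j=0}^{m}a_{j}t^{j}$; since the quadratic part of $F$ in $\mathbf{a}$ has rank one, $F$ is reducible over $k(t)$ as a polynomial in $\mathbf{a}$ exactly when $-f/g$ is a square in $k(t)$. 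As $-1$ is a square in $k=\bar{\mathbb{F}}_{q}$, this would force $fg$ to be a square, which is excluded by hypothesis. A purely $t$-dependent factor of $F$ would divide both $g$ (the coefficient of $a_{0}^{2}$) and $\tilde{f}=f+gp^{2}$ (the constant term), hence $f$, contradicting $\gcd(f,g)=1$. Therefore $F$ is irreducible in $k[\mathbf{a},t]$, and since its leading $t$-coefficient is a nonzero constant, Gauss's lemma upgrades this to irreducibility in $k(\mathbf{a})[t]$; thus $G$ is transitive.

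Next I would pass to the one-parameter subfamily $a_{1}=\dots=a_{m}=0$, writing $c=a_{0}$, so that $F_{c}(t)=f(t)+g(t)(p(t)+c)^{2}$. Its roots satisfy $(p(t)+c)^{2}=-f(t)/g(t)=w^{2}$ on the irreducible curve $C\colon w^{2}=-f/g$ (irreducible precisely because $-f/g$ is not a square), and so are cut out by $c=w-p(t)=:\phi$. Hence $k(C)/k(c)$ is the field extension attached to $F_{c}$, and $\phi\colon C\to\mathbb{P}^{1}$ is a degree-$n$ cover with connected source, whose monodromy group is therefore transitive. The decisive feature of this reduction is that the base is $\mathbb{P}^{1}$, which is simply connected even in characteristic $p$; consequently the Galois group of the Galois closure of $k(C)/k(c)$ is generated by the inertia groups at the branch points of $\phi$, with no Artin--Schreier obstruction of the kind that would arise over $\mathbb{A}^{m+1}$. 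By the specialization inequality this monodromy group embeds into $G$, so it suffices to prove that it equals $S_{n}$.

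The crux is to show that $\phi$ exhibits only simple branching, for then every inertia generator is a transposition (the ramification index is $2$, tame since $q$ is odd), and a transitive subgroup of $S_{n}$ generated by transpositions is all of $S_{n}$; this forces $G=S_{n}$ as well. At a finite critical point the equation $d\phi=0$ reads $w=-(f/g)'/(2p')$, which together with $w^{2}=-f/g$ determines finitely many points, each of which I expect to be simply ramified. I anticipate the main obstacle to be the ramification of $\phi$ over $c=\infty$, governed by the pole orders of $w-p(t)$ on $C$ above $t=\infty$ and at the odd-order zeros of $g$; controlling these requires the hypotheses $\deg f<\deg g$ and $g$ monic, and one must also check that the critical values are distinct so that the inertia elements are honest transpositions. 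Should the fixed data $f,g,p$ fail to yield a Morse cover, I would instead run the same argument along a generic pencil $a_{j}=c\beta_{j}$ in parameter space, where a Bertini--Lefschetz genericity argument guarantees simple branching; either way the conclusion $G=S_{n}$ follows.
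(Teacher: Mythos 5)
Your opening step (irreducibility of $F(\mathbf{a},t)$ over $k(\mathbf{a})$ via the rank-one quadratic form in $\mathbf{a}$, the $t$-only-factor argument, and Gauss's lemma) is sound and matches Proposition \ref{prop1} in substance. From there, however, you take a genuinely different route from the paper -- you try to realise $S_n$ as the monodromy group of the single pencil $F_c(t)=f(t)+g(t)(p(t)+c)^2$ and to conclude from ``transitive and generated by transpositions'' -- and it is exactly at the point you yourself call the crux that the argument is not carried out. You never prove that $\phi=w-p(t)$ is simply branched: you only say you ``expect'' the finite critical points to be simply ramified and that ``one must also check'' that the critical values are distinct. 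Both facts are needed; if two ramification points of index $2$ lie over the same branch point, the inertia generator is a product of two disjoint transpositions, and a transitive group generated by such elements need not be $S_n$. More seriously, in characteristic $p$ the fibre over $c=\infty$ cannot be waved away. The Galois group is generated by the inertia at \emph{all} branch points of $\mathbb{P}^1$, including $\infty$, and the inertia there is essentially never a transposition (its cycle type is governed by the pole orders of $w-p(t)$ at $t=\infty$ and at the odd-order zeros of $g$, and it may be wildly ramified). In characteristic $0$ one discards this element via the product-one relation; over $\bar\F_q$ that relation survives only if all ramification, including at $\infty$, is tame, which you do not verify. Failing that, the subgroup generated by the finite-place transpositions could a priori be a proper normal subgroup, and you do not prove it is transitive. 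The fallback ``generic pencil plus Bertini--Lefschetz'' is also unjustified in positive characteristic, where such genericity statements can fail. A smaller point: transitivity of the pencil's monodromy requires $F_c$ to be irreducible over $k(c)$, i.e.\ $[k(C):k(c)]=n$; irreducibility of the curve $C$ alone does not give this, although your discriminant argument does extend to supply it.

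It is worth contrasting this with the paper's architecture, which is designed precisely to avoid controlling every branch point. The paper first proves that $G$ is \emph{doubly} transitive (Proposition \ref{prop2}): specialize $a_0=0$, translate so that $t=0$ is a root, and show that the cofactor obtained by removing that root is still irreducible over $k(a_1,\dots,a_m)$ by the same discriminant-in-$a_1$ computation; by Abhyankar's criterion this is exactly $2$-transitivity. Once $G$ is doubly transitive, a \emph{single} transposition suffices (Serre), and that one transposition is produced from one Morse specialization via Osada's inertia lemma (Lemma \ref{lem2}). Your approach, by demanding that the whole group be generated by transpositions of one pencil, takes on a strictly harder verification -- every finite inertia must be a transposition and the place at infinity must be tamed -- and none of these verifications is actually performed. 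As written, the proposal has a genuine gap.
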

To prove  Theorem \ref{th2}, as a first step we show the following:
\subsection{$\mathrm{Gal}(F({\mathbf{a}},t),k({\mathbf{a}}))$ is doubly transitive}      
\begin{proposition}\label{prop1}
 The polynomial function, $F(\mathbf{a},t)=\tilde f(t)+g(t)\{\sum\limits_{j=0}^{m}a_jt^j)^2+2p(t)\sum\limits_{j=0}^{m}a_jt^j\}$  
  is separable in $t$ and irreducible in the ring $k({\mathbf{a}})[t]$.
\end{proposition}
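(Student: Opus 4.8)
The plan is to first put $F(\mathbf{a},t)$ into the compact form that makes the original substitution visible. Completing the square in the bracket, and using $\tilde f=f+gp^2$, gives
\[
F(\mathbf{a},t)=\tilde f(t)+g(t)\Big\{\big(\textstyle\sum_{j=0}^{m}a_jt^j\big)^{2}+2p(t)\sum_{j=0}^{m}a_jt^j\Big\}=f(t)+g(t)\,h(t)^{2},\qquad h(t)=p(t)+\sum_{j=0}^{m}a_jt^j.
\]
All the analysis is then carried out on the right-hand side $f+gh^2$, first establishing irreducibility and then deducing separability from it.

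For irreducibility I would isolate the variable $a_0$. Set $K=k(a_1,\dots,a_m)$ and $r(t)=p(t)+\sum_{j\ge 1}a_jt^j\in K[t]$, so that $h=a_0+r(t)$. Since $a_0\mapsto u:=a_0+r(t)$ is an invertible $K[t]$-linear change of generator, $K[a_0,t]=K[u,t]$ is again a polynomial ring in two variables over $K$, and in it $F=g(t)\,u^{2}+f(t)$. Regarded as a quadratic in $u$ over the field $K(t)$, its discriminant is $-4f(t)g(t)$. Because $k=\bar\F_q$ contains $\sqrt{-1}$ and $q$ is odd, $-4fg$ is a square in $K(t)$ if and only if $fg$ is; and since $K(t)/k(t)$ is purely transcendental, $k(t)$ is relatively algebraically closed in $K(t)$, so a polynomial $fg\in k[t]$ is a square in $K(t)$ exactly when it is a square in $k(t)$, hence (by unique factorization) in $k[t]$. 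As $fg$ is assumed not to be a square, the discriminant is a non-square, so $F$ has no root in $K(t)$ and is irreducible as a quadratic in $u$ there. Its content as a polynomial in $u$ is $\gcd_{K[t]}(f,g)=1$ by the coprimality of $f$ and $g$, so $F$ is irreducible in $K[u,t]=K[a_0,t]$; since $\deg_t F=\deg g+2\deg p\ge 1$, Gauss's lemma upgrades this to irreducibility in $K(a_0)[t]=k(\mathbf{a})[t]$.

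For separability I would use that an irreducible polynomial over a field of characteristic $p$ is separable precisely when its $t$-derivative is nonzero, i.e.\ when $F\notin k(\mathbf{a})[t^{p}]$. Grouping $F=f+gh^{2}$ by monomials in $\mathbf{a}$, the coefficient of $a_0^{2}$ is $g(t)$ while the coefficient of $a_0a_1$ is $2t\,g(t)$. If $F$ were to lie in $k(\mathbf{a})[t^{p}]$, each of these coefficients would lie in $k[t^{p}]$; since $q$ is odd this forces both $g\in k[t^{p}]$ and $tg\in k[t^{p}]$, which is impossible for $g\neq 0$, as the two conditions would require $p\mid e$ and $p\mid e+1$ for every exponent $e$ occurring in $g$. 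Hence $\partial_t F\neq 0$ and, being irreducible, $F$ is separable.

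The step I expect to be the main obstacle is separability in positive characteristic: one must genuinely exclude $F$ from being a nontrivial polynomial in $t^{p}$, and the clean device of extracting the two coefficients $a_0^{2}$ and $a_0a_1$ and playing them against each other relies essentially on $q$ being odd (so that $2\neq 0$) and on $m\ge 1$ so that the variable $a_1$ is present. The irreducibility step is comparatively routine once the discriminant $-4fg$ has been identified; its only delicate point is the descent of ``square-ness'' from $K(t)$ down to $k[t]$, which is where the hypotheses that $k$ is algebraically closed and that $fg$ is not a square are both used.
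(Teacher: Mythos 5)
Your proof is correct. For irreducibility you follow essentially the paper's route: view $F$ as a quadratic in $a_0$, compute the discriminant $-4f(t)g(t)$, observe it is a non-square, and invoke Gauss's lemma. Your substitution $u=a_0+r(t)$ merely makes that computation transparent, and you are more careful than the paper on two points it leaves implicit: the primitivity check (content $\gcd(f,g)=1$) needed to apply Gauss's lemma, and the descent argument showing that $fg$, being a non-square in $k[t]$, remains a non-square in $k(a_1,\dots,a_m)(t)$ because $k(t)$ is relatively algebraically closed in that purely transcendental extension. Where you genuinely diverge is separability: the paper deduces separability of $F(\mathbf{a},t)$ in $t$ from separability of $F(x,t)$ in $x$ by citing a result of Rudnick, whereas you give a direct, self-contained argument --- since $F$ is irreducible it suffices to rule out $F\in k(\mathbf{a})[t^{p}]$, which you do by playing the coefficient $g(t)$ of $a_0^{2}$ against the coefficient $2tg(t)$ of $a_0a_1$. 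Your version is more elementary and makes visible exactly where the hypotheses $q$ odd and $m\ge 1$ enter; the paper's citation is shorter but leaves the reader to verify that the quoted lemma applies in this setting.
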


\begin{proof}
 To prove irreducibility of $F(\textbf{a},t)$ in $k({\textbf{a}})[t]$, 
 we consider $F({\textbf{a}},t)$ as a quadratic equation in the variable $a_0$
 and show that its  discriminant is square free. We have from equation (\ref{maineq1}), 
 \[F({\textbf{a}},t)=\tilde f(t)+g(t)\{(\sum\limits_{j=0}^{m}a_jt^j)^2+2p(t)\sum\limits_{j=0}^{m}a_jt^j\}\]
 \[F({\textbf{a}},t)=\tilde f(t)+g(t)\big\{l(t)^2+2l(t)p(t)\big\}\text{ where } l(t)=\sum\limits_{i=0}^{m}a_it^i.\]
% \[F(a,t)= g(t)a_0^2+2g(t)(\sum_{i=1}^{i=n}a_it^i+p(t))a_0+ \tilde f(t)+g(t)\{(\sum_{i=1}^{i=n}a_it^i)^2+2p(t)(\sum_{i=1}^{i=n}a_it^i)\}=0\]
 writing, 
 $ F(\mathbf{a},t)$  as a quadratic  equation in $a_0$, we have,
\[ g(t)a_0^2+\{2g(t)(l_1(t)+p(t))\}a_0+\big\{\tilde f(t)+g(t)\{l_1(t)^2+2p(t)l_1(t)\}\big \}=0\]
 \[\text{ where }l_1(t)=\sum_{i=1}^{m}a_it^i\]  
\text{ The discriminant of the above equation is } 
\begin{equation}\label{eqdisc}
\Delta(F({\mathbf{a}},t))= 
4g(t)^2\{l_1(t)+p(t)\}^2-4g(t)\{\tilde f(t)+g(t)\{l_1(t)^2+2p(t)l_1(t)\}\}\end{equation}
Substituting, $\tilde f(t)=f(t)+g(t)p(t)^2$, in the second sum of equation (\ref{eqdisc}), we have
\[4g(t)^2\{l_1(t)+p(t)\}^2 -4g(t)f(t)-4g(t)^2\{p(t)^2+l_1(t)^2+2p(t)l_1(t)\}\] Hence,
 
 \[\Delta(F({\mathbf{a}},t))=-4g(t)f(t)\neq 0\]
Clearly, $\Delta(F({\mathbf{a}},t)=-4f(t)g(t)$, is not a square by our choice of $f$ and  $g$. Therefore, $F(\mathbf{a},t)$ is irreducible 
in $k[a_1,...,a_m,t][a_0]=k[a_0,...,a_m,t]$. Hence, by Gauss lemma, $F({\mathbf{a}},t)$ is irreducible in $k(a_0,...,a_m)[t]$.
Coming to the separability of $F(\mathbf{a},t)$ in $t$, we see that,
  the irreducible polynomial  $F(x,t)$  is separable in $x$, (since the first derivative of $F(x,t)$ 
  is not a zero polynomial by the choice of $f$ and $g$.)
Hence,  the result by Rudnik in \cite{zr} confirms, the polynomial $F(\mathbf{a},t)$ is separable in $t$.
\end{proof}
In the next proposition we prove that the Galois group of  $F({\mathbf{a}},t)$ over $k(a_0,...,a_m)$ is doubly transitive with respect 
to the action on the roots of $F.$
We quickly go through the definitions of doubly transitivity as given in [page no.119, \cite{ssa}]. Let $K$ be a field.
Consider a polynomial $f(y)=y^n+a_1y^{n-1}+...+a_n$ with $a_i\in K.$ We can factor $f$ as $f(y)=(y-\alpha_1)(y-\alpha_2)...(y-\alpha_n)$,
where the roots $\alpha_i$ are in some extension field of $K.$ Let $L=K(\alpha_1,...,\alpha_n)$ then $L$ is called the splitting field of $K$.
The Galois group
of $L$ over $K$, denoted by $\mathrm{Gal}(L/K)$ is the group of all $K-$automorphisms of $L$.
i.e., those field automorphisms of $L$ which leave $K$, element wise fixed. Assuming $L$ to be separable over $K$, and $f$ to have no multiple factors
in $K[y]$, every member of $\mathrm{Gal}(L/K)$ permutes $\alpha_1,...,\alpha_n$, and this gives 
an injective homomorphism of $\mathrm{Gal}(L/K)$ into $S_{n}$ whose image is called Galois group of $f$ over $K$, denoted $\mathrm{Gal}(f,K)$.
$\mathrm{Gal}(f,K)$ is transitive if and only if $f$ is irreducible in $K[y] $ and 
$\mathrm{Gal}(f,K)$, a subgroup of symmetric group $S_n$
is $2-$ transitive if and only if $G$ is transitive and its one point stabilizer group $G_{\alpha_1}$ is transitive as a subgroup of $S_{n-1.}$
where, by definition, $G_{\alpha_1}=\{g\in G| g(\alpha_1)=\alpha_1\}$ is thought of as a 
subgroup of the group of all permutations of  roots $\{\alpha_2,...,\alpha_n\}$ of $f.$ Note that if $G$ is 
transitive, then all the one point stabilizers $G_{\alpha_i},i=1,2,...,n$  are isomorphic to each other.
To see the equational analogue of this, we consider, $f(y)$,  an irreducible polynomial in $K[y]$. We throw away a root of $f$, say $\alpha_1$
to get \[ f_1(y)=\frac{f(y)}{(y-\alpha_1)}=y^{n-1}+b_1y^{n-2}+...+b_{n-1}\in \K(\alpha_1)[Y].\] Then $f$ and $f_1$ are irreducible in $K[y]$ and 
$K(\alpha_1)[y]$ respectively if and only if $\mathrm{Gal}(f,K)$ is doubly transitive. \cite{ssa1}

\begin{proposition}\label{prop2}
 For, $F(\mathbf{a},t)$ defined above, the Galois group $\mathrm{G}$ of $F({\mathbf{a}},t)$ 
 over $k({\mathbf{a}})$ is doubly transitive with respect to the
 action on the roots of $F({\mathbf{a}},t)$.
\end{proposition}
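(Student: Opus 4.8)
The plan is to deduce double transitivity from the irreducibility of $F(\mathbf a,t)$ already obtained in Proposition \ref{prop1}, combined with the ``throw away a root'' criterion recalled above. Since $F(\mathbf a,t)$ is irreducible over $k(\mathbf a)$, the group $G$ is transitive, and $G$ is doubly transitive if and only if $F_1(t):=F(\mathbf a,t)/(t-\alpha)$ is irreducible over $k(\mathbf a)(\alpha)$, where $\alpha$ is one root of $F(\mathbf a,t)$. So the entire task reduces to proving this last irreducibility, and I would do it by reproducing, over the one-root extension, the exact discriminant computation used in Proposition \ref{prop1}.

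First I would analyse the field $L(\alpha):=k(\mathbf a)(\alpha)$. Writing $F(\mathbf a,t)=f(t)+g(t)\{p(t)+l(t)\}^2$ and setting $w:=p(\alpha)+l(\alpha)$, the relation $F(\mathbf a,\alpha)=0$ reads $w^{2}=-f(\alpha)/g(\alpha)$. Because $a_0$ enters $p+l$ linearly, I can solve $a_0=w-p(\alpha)-l_1(\alpha)$, which exhibits $L(\alpha)=M_0(w)$ with $M_0:=k(a_1,\dots,a_m,\alpha)$. Viewing $F(\mathbf a,\alpha)=0$ as the quadratic in $a_0$ of Proposition \ref{prop1} shows that $a_0$ is quadratic over $k(a_1,\dots,a_m)(\alpha)$, so $\alpha$ is transcendental over $k(a_1,\dots,a_m)$ and $M_0$ is a rational function field; and since $fg$ is a non-square with $f,g$ coprime and $q$ odd, $-f(\alpha)/g(\alpha)$ is a non-square in $M_0$, whence $[L(\alpha):M_0]=2$.

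The key step is then to let $a_1$ play the role that $a_0$ played in Proposition \ref{prop1}. Substituting $a_0=w-p(\alpha)-l_1(\alpha)$ gives $p(t)+l(t)=w+a_1(t-\alpha)+\mu(t)$, where $\mu(t):=\{p(t)-p(\alpha)\}+\sum_{i=2}^{m}a_i(t^{i}-\alpha^{i})$ does not involve $a_1$, so
\[F(\mathbf a,t)=f(t)+g(t)\bigl[a_1(t-\alpha)+(w+\mu(t))\bigr]^{2}\]
is a quadratic in $a_1$ over $E(t)$, where $E:=k(a_2,\dots,a_m,\alpha,w)$ and $L(\alpha)=E(a_1)$. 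A direct expansion gives $\mathrm{disc}_{a_1}F=-4f(t)g(t)(t-\alpha)^{2}$, hence $\mathrm{disc}_{a_1}F_1=-4f(t)g(t)$, which is exactly the non-square polynomial of Proposition \ref{prop1}. Thus $F_1$ is irreducible as a quadratic in $a_1$ over $E(t)$; after checking that $F_1$ is primitive in $a_1$ over $E[t]$ and primitive in $t$ over $E[a_1]$ (its leading coefficient in $t$ is the nonzero constant built from the leading coefficients of $g$ and $p$, and no factor of $g(t)(t-\alpha)$ is common to all $a_1$-coefficients, using $f,g$ coprime and $f(\alpha)\neq0$), two applications of Gauss's lemma yield that $F_1$ is irreducible in $E(a_1)[t]=L(\alpha)[t]$. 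This is the required irreducibility, and double transitivity follows.

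The main obstacle is not the discriminant calculation, which merely reproduces Proposition \ref{prop1}, but the structural set-up that makes it legitimate: showing that $\alpha$ is transcendental over $k(a_1,\dots,a_m)$, so that $M_0$ and $E$ are rational and $a_1$ is a genuine transcendental in which $F_1$ is a primitive quadratic with nonzero leading coefficient $g(t)(t-\alpha)$. Once this is in place, the non-square hypothesis on $fg$ and the coprimality of $f$ and $g$ do all the work, exactly as in Proposition \ref{prop1}.
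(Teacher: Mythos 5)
Your proposal is correct, and it shares its computational core with the paper's proof --- both arguments ultimately come down to writing the degree-$(n-1)$ ``cofactor'' as a quadratic in $a_1$ and observing that its discriminant is the non-square $-4f(t)g(t)$ --- but the framework around that computation is genuinely different. The paper argues by specialization: it sets $a_0=0$, shifts $t$ by a root $\alpha$ of $\tilde f$ so that the specialized polynomial acquires a linear factor, proves the remaining factor irreducible over $k(a_1,\dots,a_m)$ by the discriminant-in-$a_1$ trick, and then transports the conclusion back to $\mathrm{G}$ through the inclusion $\mathrm{G}_t\subset \mathrm{G}$ of the specialized Galois group (the relevant point being that $\mathrm{G}_t$ fixes one root and acts transitively on the other $n-1$). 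You instead work at the generic point: you adjoin a root $\alpha$ of $F(\mathbf a,t)$ itself, solve for $a_0$ in terms of $\alpha$ and the square root $w$ of $-f(\alpha)/g(\alpha)$, and verify Abhyankar's ``throw away a root'' criterion literally, proving $F/(t-\alpha)$ irreducible over $k(\mathbf a)(\alpha)=E(a_1)$. The price of your route is the field-theoretic bookkeeping --- transcendence of $\alpha$ over $k(a_1,\dots,a_m)$, the identification $L(\alpha)=E(a_1)$, and the two primitivity checks feeding Gauss's lemma --- all of which you address correctly (the key facts $g(\alpha)\neq 0$, $f(\alpha)\neq 0$ and the coprimality of $f,g$ do exactly the work you say they do). What it buys is that nothing about the specialization needs to be justified: in the paper's version one must check that the specialization is ``good'' and that the shifted polynomial really factors as $t\cdot\{\cdots\}$, which after the substitution $t\mapsto t+\alpha$ requires an implicit compensating change in the variables $a_j$ (since $\sum_{j\geq 1}a_j(t+\alpha)^j$ has nonzero constant term); moreover the paper's intermediate assertion that $\mathrm{G}_t$ is doubly transitive cannot be taken literally, as the specialized polynomial is reducible and $\mathrm{G}_t$ is not even transitive --- what is true is only the fixed-point-plus-transitive-complement statement. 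Your generic-point version sidesteps these issues entirely, at the cost of being slightly heavier to set up.
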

\begin{proof}
Proposition   \ref{prop1} implies, the Galois group $\mathrm{G}=\mathrm{Gal}(F({\mathbf{a}},t),k({\mathbf{a}}))$
is transitive.
We show that,  the Galois group $\mathrm{Gal}(F({\mathbf{a}},t),k({\mathbf{a}}))$
is doubly transitive by specializing $a_0=0.$ 
Under the specialization $a_0=0$, we have
\[\tilde F(a_1,...,a_m,t)=\tilde f(t)+g(t)\{(\sum\limits_{j=1}^{m}a_jt^j)^2+2p(t)\sum\limits_{j=1}^{m}a_jt^j\}\]
 Let $\alpha \in k$ be a root of $\tilde f(t)$, by substituting, $t$ by $t+\alpha$, we may assume that, $\tilde f(0)=0.$ Hence, 
$ f_{0}(t)=\tilde f(t)/t$ is a polynomial.   
\[\tilde F(a_1,...,a_m,t)= t\big\{f_0(t)+g(t)\sum\limits_{i=1}^{m}\sum\limits_{j=1}^{m}a_ia_jt^{i+j-1}+g(t)2p(t)\sum\limits_{j=1}^{m}a_jt^{j-1}\big\}\]
We first show that,\begin{equation}\label{irr-dou}
                    f_0(t)+g(t)\sum\limits_{i=1}^{m}\sum\limits_{j=1}^{m}a_ia_jt^{i+j-1}+g(t)2p(t)\sum\limits_{j=1}^{m}a_jt^{j-1}
                   \end{equation}
is irreducible in $k(a_1,...,a_m)[t]$ and separable in $t$.

Separability of polynomial equation (\ref{irr-dou}) is attained   by applying the result in \cite{zr}.
Now, we show that,  equation(\ref{irr-dou}) is irreducible in $t$.
We prove this by writing,  
equation(\ref{irr-dou}) as a quadratic equation in $a_1$ and show that, discriminant of this quadratic equation is $f(t).g(t)$, 
which is  not a square polynomial.  Hence writing, equation(\ref{irr-dou})
  as a quadratic equation in $a_1$
we have
\begin{equation}\label{disc-dou}
\begin{split}
&t\cdot g(t)a_1^2+ g(t)\{\sum\limits_{j=2}^{m}a_jt^j+2p(t)\}a_1 +\\
&f_0(t)+g(t)\{\sum\limits_{i=2}^{m}\sum\limits_{i=2}^{m}a_ia_jt^{i+j-1}+2p(t)
\sum\limits_{i=2}^{m}a_it^{i-1}\}=0
\end{split}
\end{equation}
Discriminant of the above quadratic equation  (\ref{disc-dou})  in $a_1$ is
\[g(t)^{2}\{\sum\limits_{j=2}^{m}a_jt^j+2p(t)\}^2-4tg(t)\big[f_0(t)+g(t)\{\sum\limits_{i=2}^{m}\sum\limits_{i=2}^{m}a_ia_jt^{i+j-1}+2p(t)\sum\limits_{i=2}^{m}a_it^{i-1}\}\big]\]
substituting, $f_0(t)=\frac{1}{t}(f(t)+g(t)p(t)^2)$
\[g(t)^{2}\{\sum\limits_{j=2}^{m}a_jt^j+2p(t)\}^2-4f(t)g(t)-g(t)^2\{\sum\limits_{j=2}^{m}a_jt^j+2p(t)\}^2=-4f(t)g(t)\]
Clearly, equation (\ref{irr-dou}) is irreducible in $k(a_1,...,a_m)[t]$ and separable in $t$.
Let $\mathrm{G}_t$ be the Galois group of $\tilde F(a_1,...,a_m,t)$ over $k(\alpha,a_1,...,a_m)$. 
 Hence, from the discussion above  Proposition \ref{prop2} it is clear that, $\mathrm{G}_t$ is doubly transitive subgroup of the symmetric group
$S_{deg \tilde f}$.   
Since, $\tilde F(a_1,...,a_m,t)$ is separable, specialization induces, $\mathrm{G}_t\subset \mathrm{G}$, which is uniquely determined up to conjugation.
Hence, the stabilizer of a root of $F$ in $\mathrm{G}$ is transitive. Thus $\mathrm{G}$ is doubly transitive.
\end{proof}
\subsection*{ Proof of Theorem \ref{th2}}
\begin{proof}
 Already, we have seen Galois group of $F({\mathbf{a}},t)$ over $k({\mathbf{a}})$ is doubly transitive. Hence, it only remains to show
Galois group of $F({\mathbf{a}},t)$ over $k({\mathbf{a}})$ contains a tranposition.
To achieve this, we first show  that at some specialization $a_m=0$, the polynomial $F(a_0,...,a_{m-1})$ has one double zero and rest $(n-2)$ simple zeros.
$$\text{ Let }\tilde F(\mathbf{a},t)=F({\mathbf{a}},t)|a_m=0$$ 
%we prove the following:
%\subsection{$\tilde F(\mathbf{a},t)$ has one double zero and rest $(n-2)$ simple zero.}
\begin{definition}\label{Morse}
 A polynomial $f$ is called { \it Morse function }\cite{jp} if 
\begin{enumerate}
\item $f(\beta_i)\neq f(\beta_j),$ for $i\neq j$ i.e., critical values of $f$ are distinct.
 \item The zeros $\beta_1,\beta_2,...,\beta_{n-1}$ of derivative $f^{\prime}$ of $f$ are simple. i.e., 
  critical values of $f$ are non degenerate.
\end{enumerate}
\end{definition}
%subsection{Existence of Transposition in $\mathrm{G}=\mathrm{Gal}(F({\mathbf{a}},t),k({\mathbf{a}}))$}\label{disc-nonvanish} 
It is well known that,  discriminant of a monic separable polynomial
  is given by, \begin{equation}\label{disdef}disc(F)=\pm Res(F,F^{\prime})\end{equation}
  Proposition \ref{prop1}  implies, the specialized polynomial  $F({\mathbf{a}},t)|a_m=0$ (equation (\ref{speceq})) 
 is separable in $t$ and irreducible 
in $k(a_0,a_1,...,a_{m-1})[t]$.  We have,
 \begin{equation}\label{speceq}\tilde F(a_0,...,a_{m-1},t)=\tilde f(t)+g(t)\{(\sum\limits_{j=0}^{m-1}a_jt^j)^2+2p(t)\sum\limits_{j=0}^{m-1}a_jt^j\}\end{equation}
 Separability of $\tilde F$ implies for
$(A_0,A_1,...,A_{m-1}) \in \bar k^{m}$, the system of equations below does not have a solution in the algebraic closure of $k$.  
\begin{equation}\label{eq2}
 \begin{cases}
   \tilde F^{\prime}(\rho_i)=0\\
  \tilde F^{\prime}(\rho_j)=0\\
  \tilde F(\rho_i)=\tilde F(\rho_j)  \text{ for some} \rho_i,\rho_j \text{ in the algebraic closure of } {k}
  \end{cases}
\end{equation} 
 Which further imply, the critical values of $\tilde F(a_0,...,a_{m-1},t)$ are distinct. Proving condition $(1)$ of Definition {\ref{Morse}}.
  Detailed explanation is given in (\cite{CR}, Section 3 and Section 4).  
 It remains to prove, the condition $(2)$ of Definition \ref{disdef}, i.e., critical values of $\tilde F$ are non degenerate.
 A small calculation shows   derivative of $\tilde F(t)$ and derivative of  $\tilde F^{\prime}(t)$ with respect to $t$  have no common root. 
Thus, critical values of $\tilde F$ are non degenerate. 
Hence, the function $\tilde F(a_0,...,a_{m-1},t)$ is Morse. Hence, the polynomial $F(a_0,...,a_{m-1})$ has one double zero and rest $(n-2)$ simple zeros.
Hence a transposition in $G=\mathrm{Gal}(F({\mathbf{a}},t),k({\mathbf{a}}))$ is implied by (\cite{Ho}, Lemma1) which is stated below.
\begin{lemma}\label{lem2}
 Let $p$ be a prime number and $\mathfrak{p}$ be a prime ideal in $K$ satisfying $\mathfrak{p}|p.$
 If $f(x)\equiv (x-c)^{2}\bar h(x) \pmod{p}$ for some $c\in \Z$ and  a separable polynomial $\bar h(x)\equiv \pmod {p}$ 
 such that $\bar h(c)\not\equiv 0{\pmod {p}}$,
 then the inertia group of $\mathfrak{p}$ over $\Q$ is either trivial or a group generated by a transposition.
\end{lemma}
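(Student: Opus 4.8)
The plan is to deduce the conclusion directly from the defining property of the inertia group, namely that $I_{\mathfrak{p}}$ acts trivially on the residue field. Let $K$ denote the splitting field of $f$ over $\Q$ (so that $G=\mathrm{Gal}(K/\Q)$ embeds in $S_n$ through its action on the roots), let $\mathcal{O}_K$ be its ring of integers, and let $\theta_1,\dots,\theta_n\in\mathcal{O}_K$ be the roots of the monic polynomial $f$, which we take to be separable over $\Q$ so that these roots are distinct. Since $\mathfrak{p}\mid p$, reduction modulo $\mathfrak{p}$ is a ring homomorphism $\mathcal{O}_K\to\mathcal{O}_K/\mathfrak{p}=:\mathbb{F}$, a finite extension of $\mathbb{F}_p$, and because $f=\prod_i(x-\theta_i)$ its reduction factors as $\bar f=\prod_i(x-\bar\theta_i)$ in $\mathbb{F}[x]$. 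In particular, for each $r\in\mathbb{F}$ the multiplicity of $r$ as a root of $\bar f$ equals $\#\{i:\bar\theta_i=r\}$.

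First I would translate the hypothesis into a statement about the fibres of the reduction map on the roots. The congruence $f\equiv(x-c)^2\bar h(x)\pmod p$ with $\bar h$ separable and $\bar h(c)\not\equiv0\pmod p$ says exactly that $\bar f$ has a single repeated root, which is a double root at $c$, together with $n-2$ further roots that are simple and distinct from $c$. Combined with the multiplicity count above, this forces precisely two of the reductions, say $\bar\theta_1=\bar\theta_2=c$, to coincide, while the remaining residues $\bar\theta_3,\dots,\bar\theta_n$ are pairwise distinct and different from $c$. Thus the reduction map on roots has one fibre of size two (over $c$) and $n-2$ fibres of size one.

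Next I would invoke that $I_{\mathfrak{p}}\subseteq G$ acts trivially on $\mathbb{F}$, i.e. $\overline{\sigma x}=\bar x$ for every $x\in\mathcal{O}_K$ and every $\sigma\in I_{\mathfrak{p}}$. For a root lying over a simple residue this pins $\sigma$ down: if $i\ge 3$, then $\sigma\theta_i$ is again a root of $f$ with $\overline{\sigma\theta_i}=\bar\theta_i$, and since $\theta_i$ is the unique root whose residue is $\bar\theta_i$, we conclude $\sigma\theta_i=\theta_i$. Hence every element of $I_{\mathfrak{p}}$ fixes $\theta_3,\dots,\theta_n$, and the same reasoning confines $\sigma$ to permuting the two roots $\theta_1,\theta_2$ lying over $c$ among themselves. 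Therefore $I_{\mathfrak{p}}$ is contained in the order-two subgroup $\langle(\theta_1\,\theta_2)\rangle$ of $S_n$, so it is either trivial or generated by the transposition $(\theta_1\,\theta_2)$, which is the assertion.

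The argument is essentially finished once the residue-field triviality of inertia is brought in; the step requiring the most care is the bookkeeping of the reduction fibres, where the separability of $\bar h$ and the condition $\bar h(c)\not\equiv0$ are exactly what guarantee a single fibre of size two and all others of size one. I do not expect a serious obstacle here, and it is worth emphasizing that the reasoning is insensitive to the residue characteristic, including $p=2$: we never have to analyse wild ramification directly, because the embedding $I_{\mathfrak{p}}\hookrightarrow\langle(\theta_1\,\theta_2)\rangle$ is forced purely by the action on the fibres of reduction.
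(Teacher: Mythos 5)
Your proof is correct, but note that there is nothing in the paper to compare it against: the paper does not prove this lemma, it simply quotes it from Osada (\cite{Ho}, Lemma 1) and uses it as a black box to produce a transposition in the Galois group. Your argument is the standard one underlying the cited result, and it is complete. The key points all check out: with $f$ monic, integral and separable and $K$ its splitting field, the roots $\theta_1,\dots,\theta_n$ lie in $\mathcal{O}_K$ and $\bar f=\prod_i(x-\bar\theta_i)$ over $\mathcal{O}_K/\mathfrak{p}$, so the hypothesis that $\bar f=(x-c)^2\bar h$ with $\bar h$ separable and $\bar h(c)\not\equiv 0$ does force exactly one fibre of size two and $n-2$ singleton fibres for the reduction map on roots; and since the inertia group acts trivially on the residue field while permuting the roots, each of its elements must fix every root in a singleton fibre and at most swap the two roots above $c$. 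Because $\mathrm{Gal}(K/\Q)$ acts faithfully on the roots, this gives $I_{\mathfrak{p}}\subseteq\langle(\theta_1\,\theta_2)\rangle$ directly, and your remark that no separate treatment of wild ramification (or of $p=2$) is needed is accurate for exactly this reason. The only hypotheses you add --- monicity, integrality, separability of $f$ over $\Q$, and $K$ being the splitting field --- are implicit in the lemma's tersely stated setting, so the proof stands as a legitimate self-contained replacement for the external citation.
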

By Proposition \ref{prop2}, the  Galois group 
$G$ of $F({\mathbf{a}},t)$ over $k({\mathbf{a}})$ is doubly transitive.
Any finite doubly transitive permutation group containing a transposition is a full symmetric group (\cite{jp} Lemma, 4.4.3). 
Thus, $\mathrm{Gal}(F({\mathbf{a}},t), k({\mathbf{a}}))$ is isomorphic to the full symmetric group $S_{n}$.
Thus, Theorem \ref{th2} is complete.
\end{proof}
\section{Irreducibility Criteria}
Since, $\mathrm{Gal}(F({\mathbf{a}},t),k({\mathbf{a}}))= S_{n}$. Now, we obtain the asymptotic for the number of irreducibles in
$I(p,m)$ for which the specialized polynomial $F({\mathbf{A}},t)$ is irreducible in $\F_{q}[t]$, where $\mathbf{A}=(A_0,A_1,...,A_m)\in \F_q$.
To attain this, we  invoke an irreducibility criteria,  as in  [Lemma 2.8, \cite{lbmj}], which reduces the above problem of finding 
irreducibles $h\in I(p,m)$ to counting of rational points of an absolutely irreducible variety over a finite field $\F_{q}$.
Then the required asymptotic follows by applying $Lang-Weil$ estimate.
   Now, we have the following proposition.
\begin{proposition}\label{Theorem2}
 Let ${\mathbf{a}}=(a_0,a_1,...,a_m)$ be an $(m+1)$ tuple of variables. Let $F({\mathbf{a}},t)\in \F_{q}[a_0,a_1,...,a_m,t]$ 
 be a polynomial that is  separable in $t$ and irreducible in the ring $k(\mathbf{a})[t]$ with $\mathrm{deg}_{t}F=n$. Let $L$ be the splitting field 
of $F({\mathbf{a}},t)$ over $\F_{q}(\mathbf{a})$. Let $k$ be an algebraic closure of $\F_{q}$. Assume that, $\mathrm{Gal}(F,k(a_0,...,a_m))=S_n.$
Then  the number of $\mathbf{A}=(A_0,...,A_m) \in \F_{q}^{m+1} $
 for which all the specialized polynomial $F(\mathbf{A},t)$, is irreducible is
$\frac{q^{m+1}}{n}\big(1+O_{n}(q^{-1/2})\big)$ as $q\rightarrow\infty$ and $n$ is fixed.
\end{proposition}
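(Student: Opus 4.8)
The plan is to pass from the field-theoretic input $\mathrm{Gal}(F,k(\mathbf{a}))=S_n$ to a count of specializations via the function-field form of the Chebotarev density theorem, made effective through the Lang--Weil estimate. First I would compare the geometric Galois group $G=\mathrm{Gal}(F,k(\mathbf{a}))$ with the arithmetic Galois group $\tilde G=\mathrm{Gal}(F,\F_q(\mathbf{a}))$ of the splitting field $L$. Inside $L$ the field of constants is some finite extension $\F_{q'}$ of $\F_q$, and $G=\mathrm{Gal}(L/\F_{q'}(\mathbf{a}))$ is a normal subgroup of $\tilde G$ with cyclic quotient $\tilde G/G\cong\mathrm{Gal}(\F_{q'}/\F_q)$. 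Since $G=S_n$ while $\tilde G\subseteq S_n$, this forces $\tilde G=S_n$ and $\F_{q'}=\F_q$; that is, the splitting field contains no nontrivial extension of constants. This coincidence is precisely what makes the forthcoming count clean.

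Second, I would invoke the irreducibility criterion [Lemma 2.8, \cite{lbmj}]. For a specialization $\mathbf{A}\in\F_q^{m+1}$ lying outside the \emph{bad locus} (where $F(\mathbf{A},t)$ drops degree in $t$ or fails to be separable), the factorization type of $F(\mathbf{A},t)$ over $\F_q$ is governed by the cycle type of the Frobenius conjugacy class $\mathrm{Frob}_{\mathbf{A}}$ in $\tilde G=S_n$ acting on the $n$ roots; in particular $F(\mathbf{A},t)$ is irreducible over $\F_q$ exactly when $\mathrm{Frob}_{\mathbf{A}}$ is an $n$-cycle. The bad locus is a proper closed subset of $\A^{m+1}$ (the degree drop and the vanishing of the $t$-discriminant each cut out a proper subvariety), so it contains only $O_n(q^m)$ rational points, which is absorbed into the final error term.

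Third, counting the $\mathbf{A}$ with $\mathrm{Frob}_{\mathbf{A}}$ an $n$-cycle is the Chebotarev problem that Lemma 2.8 reduces to counting $\F_q$-rational points on a variety cut out inside the splitting cover over $\A^{m+1}$. Because $\tilde G=G=S_n$, no constant extension intervenes, and the locus attached to the $n$-cycle class is absolutely irreducible of dimension $m+1$; the Lang--Weil estimate then gives $q^{m+1}+O_n(q^{m+1/2})$ for its point count, the implied constant depending only on $n$ (through the degree and the number of components, all bounded in terms of $n$). Weighting by the proportion of $n$-cycles in $S_n$, namely $(n-1)!/n!=1/n$, yields $\frac{q^{m+1}}{n}+O_n(q^{m+1/2})=\frac{q^{m+1}}{n}\big(1+O_n(q^{-1/2})\big)$, as claimed.

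The main obstacle is to make the first paragraph effective: one must be certain that the geometric and arithmetic Galois groups coincide, equivalently that no extension of constants hides inside $L$, for otherwise the $n$-cycle locus would break into Galois-conjugate geometric pieces, fail to be absolutely irreducible, and spoil the Lang--Weil main term (or shift it by a factor coming from $|\tilde G/G|$). Here this is automatic from $\mathrm{Gal}(F,k(\mathbf{a}))=S_n$ established in Theorem \ref{th2}, since there is no room strictly between $S_n$ and $S_n$; what remains is purely bookkeeping, namely bounding the bad locus and verifying that every implied constant is a function of $n$ alone and independent of $q$, so that the asymptotic holds as $q\to\infty$ with $n$ fixed.
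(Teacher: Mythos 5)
Your argument is correct and is essentially the paper's own route: the paper disposes of this proposition in one line by citing [Lemma 2.1, \cite{lbs}], and the chain you lay out --- the geometric group $S_n$ forcing the arithmetic Galois group to equal $S_n$ with no constant-field extension, irreducibility of $F(\mathbf{A},t)$ detected by $\mathrm{Frob}_{\mathbf{A}}$ being an $n$-cycle outside a bad locus of $O_n(q^m)$ points, and an explicit Chebotarev count via Lang--Weil giving $\frac{|C|}{|S_n|}q^{m+1}(1+O_n(q^{-1/2}))$ with $|C|/|S_n|=1/n$ --- is precisely the content of that cited lemma. The only loose spot is the final ``weighting'' step: the standard mechanism counts, for each $n$-cycle $\sigma$, the points of the splitting cover satisfying $\phi_q(x)=\sigma(x)$ (each an application of Lang--Weil) and divides by $|G|$, rather than applying Lang--Weil once to a single absolutely irreducible ``$n$-cycle locus'' and then scaling, but this does not affect correctness.
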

\begin{proof}
This is proved in [Lemma 2.1, \cite{lbs}]. 
\end{proof}
\subsection*{Proof of Theorem \ref{th1}}
\begin{proof}
 Let $n$ be fixed positive integer and $q$ an odd prime power and 
 \[F(\mathbf{a},t)= \tilde f(t)+g(t)(\sum\limits_{j=0}^{m}a_jt^j)^2+2p(t)g(t)\sum\limits_{j=0}^{m}a_jt^j.\]
 We have seen,  $\mathrm{Gal}(F(\mathbf{a},t),k(\mathbf{a}))=S_n$ 
  and  $F(\mathbf{a},t)$ satisfies all assumptions of Proposition \ref{Theorem2}. 
  Thus the number of $\{(A_0,...,A_m)\}\in \F_{q}^{m+1}$ for which $ F(\mathbf{A},t)$ 
  irreducible in $\F_q[t]$ is \[ \frac{q^{m+1}}{n}+O_{n}(q^{m+\frac{1}{2}}).\] 
 This finishes the proof since this number equals $\pi_q(I(p,m))$. Hence,  \[\pi_q(I(p,m))=\frac{\#I(p,m)}{n}+O_{n}(q^{m+\frac{1}{2}}).\]
\end{proof}
\section{ Cycle structure, Factorization type, Galois groups and Conjugacy classes}
In  previous sections, we obtained an asymptotic for the number of prime polynomials in the interval $I(p,m)$ 
for the function $F(x,t)=f(t)+x^2g(t)\in \F_{q}[t][x]$. 
Here, we derive an equidistribution result (Theorem \ref{frob}) by the  function field version of  Chebotarev Density theorem.
 We know that, factorization over $\F_{q}[t]$ resemble cycles of permutations,  below we state  some known results, mainly from  \cite{BR} and \cite{anlz}. 
 By definition, $\mathcal{M}_{n}$ 
 is the  collection of monic polynomials
 of degree $n$ consists of $q^{n}$ elements. Partition $\tau$ of a positive integer $n$ is defined to be a sequence of non-increasing positive integers
 $(c_1,...,c_k)$ such that, $|\tau|:=c_1+\cdot\cdot\cdot+c_k$ and $|\tau|=n$.
 \begin{definition}
  Every monic polynomial $f\in \F_{q}[t]$ of some degree $n$ has a factorization 
$f=P_1\cdot \cdot \cdot P_k$ in to 
irreducible monic polynomials $P_1,...,P_k \in \F_{q}[t]$, which is unique up to rearrangement. Taking degrees
we obtain  a partition of $n$ given by, $\mathrm{deg} P_1 + \cdot \cdot \cdot  +\mathrm{deg} P_k$ of $\mathrm{deg}f$ and
 its factorization type  is  given by $$\tau_{f}=(\mathrm{deg}P_1,...,\mathrm{deg}P_k)$$ \end{definition}
 \begin{definition}
  Every permutation $\sigma \in S_n$ has a cycle decomposition $\sigma=(\sigma_1...\sigma_k)$ in to
disjoint cycles $\sigma_1,...,\sigma_k$ which is unique up to rearrangement and each fixed point of $\sigma$ has cycle length 1. If $|\sigma_i|$
is the length of cycle $\sigma_i$, we   obtain  a partition of $n$ given by
 $$\tau_{\sigma}=(|\sigma_1|, \cdot \cdot \cdot  ,|\sigma_k|)$$  and it's  cycle type to be given by 
$\tau_{\sigma}=(|\sigma_1|+ \cdot \cdot \cdot  +|\sigma_k|)$
 \end{definition}
 For each partition $\tau\vdash n$,
the probability of  a random permutation  on $n$ letters has  a cycle structure $\sigma$ is given by Cauchy's formula:
\begin{equation}
\mathbb{P}(\tau_{\sigma}=\tau)=\frac{\#\{\sigma \in S_n:\tau_{\sigma}=\tau\}}{\# S_n}=\prod\limits_{j=1}^{k}\frac{1}{j^{c_j}\cdot c_j!}\end{equation}
As $q\rightarrow \infty$, the distribution over $\mathcal{M}_{n}$ of factorization types tends to distribution of cycle types in $S_{n}$ \cite{anlz}.
\begin{proposition}
 For a partition $\tau\vdash n$,
 \[ \lim\limits_{q\rightarrow \infty} \mathbb{P}_{f\in \mathcal{M}_{n}}(\tau_{f}=\tau)=\mathbb{P}_{\sigma\in S_n}(\tau_{\sigma}=\tau)\text{ 
where, } \]$\mathbb{P}_{f\in \mathcal{M}_{n}}(\tau_{f}=\tau):=\frac{1}{q^{n}}\#\{f\in \mathcal{M}_{n}:\tau_{f}=\tau\}$.
\end{proposition}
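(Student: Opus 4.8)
The plan is to obtain an exact formula for $\#\{f\in\mathcal{M}_n:\tau_f=\tau\}$, extract its leading term as $q\to\infty$, and match it with Cauchy's formula. First I would reparametrize the partition $\tau\vdash n$ by its multiplicities: for each $d\geq 1$ let $m_d$ be the number of parts of $\tau$ equal to $d$, so that $\sum_d d\,m_d=n$ and the total number of parts is $k=\sum_d m_d$. In these terms the target of the limit is Cauchy's formula, which states that a random $\sigma\in S_n$ has $m_d$ cycles of length $d$ with probability $\mathbb{P}_{\sigma\in S_n}(\tau_\sigma=\tau)=\prod_{d}\frac{1}{d^{m_d}\,m_d!}$; this is the quantity the normalized polynomial count must converge to.

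Next I would count the polynomials directly via unique factorization in $\F_q[t]$. A monic $f$ of degree $n$ satisfies $\tau_f=\tau$ precisely when, for every $d$, the degree-$d$ part of $f$ (the product of its irreducible factors of degree $d$, taken with multiplicity) is a product of exactly $m_d$ monic irreducibles of degree $d$. Such a degree-$d$ part corresponds bijectively to a size-$m_d$ multiset drawn from the $\pi_q(d)$ monic irreducibles of degree $d$, and the choices for distinct $d$ are independent. A stars-and-bars count of multisets therefore gives the exact identity
\[
\#\{f\in\mathcal{M}_n:\tau_f=\tau\}=\prod_{d}\binom{\pi_q(d)+m_d-1}{m_d}.
\]

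The final step is asymptotic evaluation. Each factor is a polynomial in $\pi_q(d)$ of degree $m_d$ with leading term $\pi_q(d)^{m_d}/m_d!$, so the Prime Polynomial Theorem $\pi_q(d)=q^d/d+O(q^{d/2}/d)$ yields
\[
\binom{\pi_q(d)+m_d-1}{m_d}=\frac{1}{m_d!}\Big(\frac{q^d}{d}\Big)^{m_d}\bigl(1+O_n(q^{-d/2})\bigr).
\]
Multiplying over all $d$ and using $\sum_d d\,m_d=n$ gives $\#\{f\in\mathcal{M}_n:\tau_f=\tau\}=q^n\prod_d\frac{1}{d^{m_d}m_d!}\,\bigl(1+O_n(q^{-1/2})\bigr)$; dividing by $q^n$ and letting $q\to\infty$ produces exactly $\prod_d\frac{1}{d^{m_d}m_d!}=\mathbb{P}_{\sigma\in S_n}(\tau_\sigma=\tau)$, as required.

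The calculation is short, so the only real subtlety is bookkeeping, and I expect the main obstacle to be confirming that the non-squarefree polynomials (those with a repeated irreducible factor) do not survive in the limit. The exact multiset formula handles this automatically: replacing $\binom{\pi_q(d)+m_d-1}{m_d}$ by the distinct-factor count $\binom{\pi_q(d)}{m_d}$ alters only lower-order terms in $q$, reflecting that polynomials with a repeated factor form a vanishing proportion. Even so, I would verify carefully that the error terms assembled across the different degrees $d$ remain $o(q^n)$ relative to the main term, which holds because each relative correction is $O_n(q^{-1/2})$.
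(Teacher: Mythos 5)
Your argument is correct, and it is genuinely more self-contained than what the paper provides: the paper states this proposition with no proof at all, simply attributing the fact to Andrade, Bary-Soroker and Rudnick \cite{anlz}, whereas you supply the standard elementary derivation. Your route --- parametrizing $\tau$ by multiplicities $m_d$, using unique factorization to identify $\{f\in\mathcal{M}_n:\tau_f=\tau\}$ with a product over $d$ of size-$m_d$ multisets of monic irreducibles of degree $d$, and then feeding the Prime Polynomial Theorem $\pi_q(d)=q^d/d+O(q^{d/2}/d)$ into the exact identity $\#\{f:\tau_f=\tau\}=\prod_d\binom{\pi_q(d)+m_d-1}{m_d}$ --- is exactly the classical computation underlying the cited result, and every step checks out: each binomial factor is $\frac{1}{m_d!}(q^d/d)^{m_d}(1+O_n(q^{-d/2}))$, the exponents recombine via $\sum_d d\,m_d=n$ to give $q^n\prod_d\frac{1}{d^{m_d}m_d!}(1+O_n(q^{-1/2}))$, and the number of distinct $d$ is bounded by $n$ so the accumulated relative error stays $O_n(q^{-1/2})$. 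Your closing remark about repeated factors is accurate but not a genuine obstacle, since the multiset count already includes them exactly; the comparison with $\binom{\pi_q(d)}{m_d}$ merely confirms they contribute only to lower-order terms. The trade-off is the usual one: the paper's citation is shorter, while your computation actually exhibits the rate of convergence $O_n(q^{-1/2})$, which is in the same spirit as the quantitative error terms used elsewhere in the paper.
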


We consider specializations $F(\mathbf{A},t)$ as described in previous sections, where $\mathbf{A}=\{A_0,A_1,...,A_m\}\in \F_{q}$.
For such an $\mathbf{A}$ denote by $\Theta(F(\mathbf{A},t)$, the conjugacy class in $S_n$ of permutations with 
cycle structure $(d_1,d_2,...d_r)$, this is the factorization class of $F(\mathbf{A},t)$.
  For a separable polynomial $f\in \F_{q}[t]$ of degree $n$, the Frobenius map $\mathrm{Fr}_{q}$ given by 
$(y\rightarrow y^{q})$ defines a permutation of the roots of $f$, 
which gives a well defined conjugacy class $\Theta(f)$ of the symmetric group $S_n.$ The  degrees of the prime factors of $f$ correspond to
the cycle lengths of $\Theta (f)$. In particular, $f$ is irreducible if and only if $\Theta(f)$  (conjugacy class of) is a full cycle.
It is known that for any fixed conjugacy class $C$ of $S_n$ the probability of  $\Theta(F(\mathbf{A},t))=C$ as $\mathbf{A}$ ranges over $\F_{q}^{m+1}$ 
is determined by the Galois group $G$ of the polynomial $F(\mathbf{A},t)$ over the field $\F_{q}(\mathbf{A})$ together with its standard action on the roots,
up to an error term of $O_{m, \mathrm{deg}F}(q^{-\frac{1}{2}}))$. Hence, we have
\begin{theorem}\label{frob}
 Let ${\mathbf{a}}=(a_0,a_1,...,a_m)$ be an $(m+1)$ tuple of variables over $\F_{q}$. 
 Let $k$ be an algebraic closure of $\F_{q}$. Let $F({\mathbf{a}},t)\in \F_{q}[a_0,a_1,...,a_m,t]$ 
 be a polynomial that is  separable in $t$ and irreducible in the ring $k(\mathbf{a})[t]$ with $\mathrm{deg}_{t}F=n$. Let $L$ be the splitting field 
of $F({\mathbf{a}},t)$ over $\F_{q}(\mathbf{a})$.  Assume that, $G=\mathrm{Gal}(F,k(a_0,...,a_m))=S_n.$
Then for every conjugacy class  $C$ in $S_n$
\[\#\{h\in I(p,m)| \Theta(F({\mathbf{A}},t))=C\}= \frac{|C|}{|G}q^{m+1}(1+O_{n}(q^{-\frac{1}{2}}))\]
\end{theorem}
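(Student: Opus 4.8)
The plan is to recast the counting problem as an equidistribution statement for Frobenius conjugacy classes on the splitting-field cover of the family, and then to invoke the effective (geometric) Chebotarev density theorem for function fields, in exactly the form already used to prove Proposition \ref{Theorem2}. Throughout I identify an element $h = p(t) + \sum_{j=0}^m A_j t^j \in I(p,m)$ with its coefficient vector $\mathbf{A} = (A_0,\ldots,A_m) \in \F_q^{m+1}$, so that the left-hand side counts specializations $\mathbf{A}$.

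First I set up the geometry. The vectors $\mathbf{A}$ are the $\F_q$-points of affine space $\A^{m+1}$, of which there are $q^{m+1}$. By Proposition \ref{prop1} the polynomial $F(\mathbf{a},t)$ of \eqref{maineq1} is separable of degree $n$ over the generic point, so its discriminant $\mathrm{disc}_t F(\mathbf{a},t) \in \F_q[\mathbf{a}]$ is a nonzero polynomial of degree bounded purely in terms of $n$. Its vanishing locus $Z = \{\mathbf{A} : \mathrm{disc}_t F(\mathbf{A},t) = 0\}$ is a proper closed subset of $\A^{m+1}$ of dimension at most $m$, hence $|Z(\F_q)| = O_n(q^{m})$. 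For $\mathbf{A} \notin Z$ the specialization $F(\mathbf{A},t)$ is separable of degree $n$, the Frobenius $\mathrm{Fr}_q$ permutes its $n$ roots in $\bar\F_q$, and the resulting conjugacy class is precisely $\Theta(F(\mathbf{A},t))$, whose cycle type records the factorization type. The degenerate points $\mathbf{A} \in Z$ do not contribute to the event $\Theta(F(\mathbf{A},t)) = C$.

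Next I attach the splitting-field cover. Let $U = \A^{m+1} \setminus Z$ and let $\pi : Y \to U$ be the connected finite étale Galois cover associated to the splitting field $L$ of $F(\mathbf{a},t)$ over $\F_q(\mathbf{a})$, with group $\mathrm{Gal}(L/\F_q(\mathbf{a}))$. By Theorem \ref{th2} the geometric Galois group $\mathrm{Gal}(F, k(\mathbf{a}))$ equals $S_n$; thus the cover is geometrically connected and $G \cong S_n$ acts on the geometric fibres as the full symmetric group on the $n$ roots. For each $\mathbf{A} \in U(\F_q)$ the arithmetic Frobenius determines a well-defined conjugacy class $\mathrm{Frob}_{\mathbf{A}} \subset G$, and under $G \cong S_n$ this class is exactly $\Theta(F(\mathbf{A},t))$, so that $\{\,\Theta(F(\mathbf{A},t)) = C\,\}$ is the event $\{\,\mathrm{Frob}_{\mathbf{A}} = C\,\}$. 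Applying the explicit Chebotarev density theorem for function fields to $\pi$, in the form of \cite{lbs} and \cite{Baroli}, gives for each conjugacy class $C$ of $G = S_n$
\[
\#\{\mathbf{A} \in U(\F_q) : \mathrm{Frob}_{\mathbf{A}} = C\} = \frac{|C|}{|G|}\, q^{m+1}\bigl(1 + O_n(q^{-1/2})\bigr),
\]
with an implied constant depending only on the degree and ramification data of the cover, all bounded in terms of $n$. Adjoining the $O_n(q^{m})$ contribution of $Z$ and using $q^{m} = O_n(q^{m+1/2})$ folds both errors into the stated relative error $O_n(q^{-1/2})$, completing the proof. As a consistency check, taking $C$ to be the class of an $n$-cycle gives $|C|/|G| = (n-1)!/n! = 1/n$ and recovers Proposition \ref{Theorem2}.

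The main obstacle is the effective equidistribution in the third step, namely obtaining error $O_n(q^{m+1/2})$ uniformly in $q$. This rests on Deligne's equidistribution theorem and the Lang--Weil estimates applied to $Y$, and requires that the relevant geometric invariants — the degree of $Y$, its number of geometrically irreducible components (one, by the geometric connectedness furnished by Theorem \ref{th2}), and the sum of Betti numbers governing the error — all be bounded purely in terms of $n$, independently of $q$. Establishing this uniform boundedness, together with confirming that the discriminant locus $Z$ is genuinely of codimension one so that $|Z(\F_q)| = O_n(q^m)$, is the technical heart of the argument; both follow from the explicit shape of $F(\mathbf{a},t)$ in \eqref{maineq1} and the separability and geometric connectedness already secured in Proposition \ref{prop1} and Theorem \ref{th2}.
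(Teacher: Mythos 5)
Your argument is correct and is essentially the paper's approach: the paper simply cites Theorem 3.1 of \cite{anlz}, which is precisely the explicit Chebotarev/equidistribution statement for the splitting-field cover that you reconstruct (removal of the discriminant locus, identification of $\Theta(F(\mathbf{A},t))$ with the Frobenius class, and the $O_n(q^{-1/2})$ relative error from Lang--Weil). You have merely unpacked the proof of the cited result rather than taken a different route.
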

\begin{proof}
 The  proof follows from  Theorem 3.1 of  \cite{anlz}.
 \end{proof}
A variant of application of Theorem 3.1 in \cite{anlz} is given in ( Theorem 2.2., \cite{AE}).

 %In this section, we describe our results on the function field analogue of Bateman Horn Conjecture, the auto correlation of Mobius function 
%and Chowla conjecture in the short interval case.
\section{Bateman-Horn conjecture}
The classical Bateman-Horn problem is described in the introduction. 
In \cite{AE}, Entin has established  an analogue of Bateman and Horn conjecture under the following set up. 
Let $F_1,...,F_m\in \F_{q}[t][x], \mathrm{deg}_{x}F_i>0$ be
non-associate, irreducible and separable over $\F_{q}(t), n$ a natural number. 
Let $a_0,a_1,...,a_n$ be free variables,
$\mathrm{f}=a_nt^n+...+a_0\in\F_{q}[\mathbf{a},t]$ and $N_i=\mathrm{deg}_{t}F_i(t,\mathrm{f})$.
Under the above assumptions,   below Theorem  is established.
  
 \begin{theorem}\label{entbat}
  Let $F_1,...,F_m\in \F_{q}[t][x]$, $\mathrm{deg}_{x}F_{i}=r_i>0$, be non associate irreducible polynomials which are separable over
  $\F_{q}(t)$ (i.e., $F_i\not\in \F_q[t][x^p]$) and monic in $x$. Let $n$ be a natural number satisfying $n\geq3$ and $n\geq sl F_i$
  for $1\leq i\leq m$. Denote $N_i=r_in$. Denote by $\mu_i$ the number of irrducible factors into which $F_i(t,x)$ splits over $\bar \F_q$.
  Then \[\#\{f\in \F_q[t], deg f=n|F_i(t,f)\in \F_{q}[t]\text{ is irreducible for $i=1,2,..,r$}\}\]
  \[=\big(\prod\limits_{i=1}^{m}\frac{\mu_i}{N_i}\big)q^{n+1}(1+O_{m,\mathrm{deg}F_i,n}(q^{\frac{-1}{2}}))\]
 \end{theorem}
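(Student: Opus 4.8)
The plan is to reduce the statement to a joint equidistribution problem for factorization types and then feed it into the function field Chebotarev machinery already invoked in Theorem~\ref{frob}. First I would introduce free variables $\mathbf{a}=(a_0,\dots,a_n)$, set $f=a_nt^n+\cdots+a_0$, and form $\tilde F_i(\mathbf{a},t):=F_i(t,f)\in\F_q[\mathbf{a}][t]$, a polynomial in $t$ whose coefficients are polynomials in $\mathbf{a}$. Since $F_i$ is monic in $x$ of degree $r_i$, the top term $f^{r_i}$ contributes $a_n^{r_i}t^{r_in}$, and the hypothesis $n\geq \mathrm{sl}\,F_i$ is exactly what guarantees that this term dominates, so that $\deg_t\tilde F_i=r_in=N_i$; the condition $n\geq 3$ then leaves enough freedom for the genericity arguments below. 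Identifying $f$ with its coefficient vector $\mathbf{A}\in\F_q^{n+1}$, there are $q^{n+1}$ vectors in all (those with $\deg f=n$ being all but an $O(q^{-1})$ fraction), and the quantity to be counted is the number of $\mathbf{A}$ for which every specialization $\tilde F_i(\mathbf{A},t)$ is irreducible in $\F_q[t]$.

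Next I would compute, for each $i$, the geometric Galois group $G_i=\mathrm{Gal}(\tilde F_i,\bar\F_q(\mathbf{a}))$ together with the arithmetic group $\tilde G_i=\mathrm{Gal}(\tilde F_i,\F_q(\mathbf{a}))$. Because $F_i$ is irreducible over $\F_q(t)$ but splits into $\mu_i$ conjugate factors over $\bar\F_q(t)$, its constant field is $\F_{q^{\mu_i}}$, and the substitution $x=f$ partitions the $N_i$ roots of $\tilde F_i$ into $\mu_i$ blocks of size $N_i/\mu_i$, one per geometric factor. The aim is to show $G_i\cong (S_{N_i/\mu_i})^{\mu_i}$, i.e.\ that each geometric factor is as generic as possible and the factors are mutually independent, while $\tilde G_i\cong S_{N_i/\mu_i}\wr C_{\mu_i}$, the constant-field Frobenius cyclically permuting the $\mu_i$ blocks. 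The symmetric-group-on-each-block part is obtained exactly as in Section~\ref{gal}: one verifies double transitivity by a specialization argument and produces a transposition by exhibiting a Morse-type specialization with a single double zero, the hypotheses $n\geq 3$ and $n\geq\mathrm{sl}\,F_i$ supplying the room needed for these specializations.

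The crucial step, and the one I expect to be the main obstacle, is to establish that the splitting fields of the $\tilde F_i$ over $\bar\F_q(\mathbf{a})$ are linearly disjoint, so that the Galois group of the compositum is the full direct product $\prod_{i=1}^m\tilde G_i$. Here the hypothesis that the $F_i$ are non-associate irreducibles is essential: it forces the branch loci of the associated covers (equivalently, their ramification data) to be genuinely distinct, and a Goursat-type analysis then rules out any common quotient that could glue two of the groups together. Once this joint genericity is in place, the function field Chebotarev density theorem, in the explicit form underlying Theorem~\ref{frob} and \cite{anlz}, shows that as $\mathbf{A}$ ranges over $\F_q^{n+1}$ the Frobenius conjugacy classes of the tuple $(\tilde F_1(\mathbf{A},t),\dots,\tilde F_m(\mathbf{A},t))$ equidistribute in $\prod_i\tilde G_i$ up to an error of $O_{m,\mathrm{deg} F_i,n}(q^{-1/2})$, controlled by the Weil bound for the relevant curves.

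Finally I would extract the density. By the joint equidistribution the probability that all $\tilde F_i(\mathbf{A},t)$ are simultaneously irreducible factors as $\prod_{i=1}^m\mathbb{P}_i$, where $\mathbb{P}_i$ is the proportion of the Frobenius coset $G_i c_i\subset\tilde G_i$ (with $c_i$ a generator of $C_{\mu_i}$) consisting of $N_i$-cycles. A short count in $S_{N_i/\mu_i}\wr C_{\mu_i}$ shows that such an element is an $N_i$-cycle precisely when the holonomy product around the block-cycle is an $(N_i/\mu_i)$-cycle, which occurs for a fraction $\tfrac{1}{N_i/\mu_i}=\mu_i/N_i$ of the coset; this specializes to the expected $1/N_i$ when $F_i$ is geometrically irreducible ($\mu_i=1$). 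Multiplying through by the total count $q^{n+1}$ and collecting error terms yields $\big(\prod_{i=1}^m\mu_i/N_i\big)q^{n+1}(1+O_{m,\mathrm{deg} F_i,n}(q^{-1/2}))$, as claimed, recovering the theorem of Entin \cite{AE}.
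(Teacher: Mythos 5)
The paper does not actually prove Theorem~\ref{entbat}: it is Entin's theorem, quoted verbatim as background with the proof residing entirely in \cite{AE}, so there is no in-paper argument to compare against. Judged on its own terms, your outline correctly reproduces the architecture of Entin's proof --- generic substitution $x=f$, separation of geometric and arithmetic monodromy, the wreath-product structure $S_{N_i/\mu_i}\wr C_{\mu_i}$, an explicit Chebotarev/Lang--Weil equidistribution over the Frobenius coset, and the count showing that a fraction $\mu_i/N_i$ of that coset consists of $N_i$-cycles (your holonomy-product computation is correct and does explain where the constant $\mu_i/N_i$ comes from).

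However, there is a genuine gap at the central step. You assert that $G_i\cong(S_{N_i/\mu_i})^{\mu_i}$ is ``obtained exactly as in Section~\ref{gal},'' but Section~\ref{gal} treats only the primitive quadratic $f+x^2g$ with its short-interval parametrization: the double-transitivity proof there rests on writing the discriminant of a quadratic in $a_0$ (resp.\ $a_1$) and getting $-4fg$, and the transposition comes from a Morse specialization tailored to that shape. None of this transfers to a general monic $F_i$ of degree $r_i$ in $x$; worse, when $\mu_i>1$ the polynomial $\tilde F_i(\mathbf{a},t)$ is geometrically \emph{reducible}, so the ``doubly transitive plus a transposition implies $S_{N_i}$'' route cannot even be formulated verbatim --- one must prove full symmetric monodromy for each of the $\mu_i$ conjugate geometric factors \emph{and} their mutual linear disjointness over $\bar\F_q(\mathbf{a})$, which you assert but do not argue. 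This, together with the cross-$i$ disjointness (where the ``Goursat-type analysis'' needs to be made concrete, e.g.\ multiplicative independence of the discriminants modulo squares and exclusion of isomorphisms between splitting fields via ramification), is precisely the technical heart of \cite{AE}, and the hypotheses $n\geq 3$ and $n\geq \mathrm{sl}\,F_i$ enter exactly there. As written, the proposal is a faithful roadmap of Entin's proof rather than a proof.
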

We study the function field version of Bateman-Horn conjecture for polynomial functions defined in equation (\ref{maineq}) namely
$F_i=f_i+g_ix^2 \in \F_{q}[t][x]$.
We obtain the following result.
\begin{theorem}\label{bat}
 Let each of $F_1,...,F_r\in \F_{q}[t][x]$,  distinct primitive quadratic functions satisfy all conditions of Proposition \ref{Theorem2}.
  Then \[\#\{h:=p(t)+\sum\limits_{j=0}^{m}A_jt^j|F_i(\mathbf{A},t)\in \F_{q}[t]\text{ is irreducible for $i=1,2,...,r$} \}\]\[=
 \big(\prod\limits_{i=1}^{r}\frac{1}{n_i}\big)q^{m+1}(1+O_{n,r}(q^{\frac{-1}{2}}))\]
 \[=\big(\prod\limits_{i=1}^{r}\frac{1}{n_i}\big)\#I(p,m)+O_{n,r}(q^{m+{\frac{1}{2}}})\]
\end{theorem}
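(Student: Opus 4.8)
The plan is to reduce the multi-polynomial count to a single application of the function field Chebotarev density theorem (Theorem \ref{frob}, i.e. Theorem 3.1 of \cite{anlz}) applied to the product $F(\mathbf{a},t)=\prod_{i=1}^{r}F_i(\mathbf{a},t)$, once the Galois group of the compositum of the splitting fields of the $F_i(\mathbf{a},t)$ over $k(\mathbf{a})$ has been identified with the full direct product $\prod_{i=1}^{r}S_{n_i}$, where $n_i=\mathrm{deg}_t F_i(\mathbf{a},t)$. Granting this identification, the event ``$F_i(\mathbf{A},t)$ is irreducible for every $i$'' translates, under the Frobenius--factorization dictionary recalled in the previous section, into the Frobenius class lying in the subset of $\prod_i S_{n_i}$ consisting of tuples $(\sigma_1,\dots,\sigma_r)$ with each $\sigma_i$ an $n_i$-cycle. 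Since the $n_i$-cycles in $S_{n_i}$ number $(n_i-1)!$, this subset has density $\prod_{i=1}^{r}(n_i-1)!/n_i!=\prod_{i=1}^{r}1/n_i$, and Theorem \ref{frob} then yields the count $\big(\prod_i 1/n_i\big)q^{m+1}\big(1+O_{n,r}(q^{-1/2})\big)$; rewriting $q^{m+1}=\#I(p,m)$ gives the second displayed form.

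First I would record that each individual Galois group is already known: applying Theorem \ref{th2} to each primitive quadratic function $F_i$ separately gives $\mathrm{Gal}(F_i(\mathbf{a},t),k(\mathbf{a}))=S_{n_i}$. The combined Galois group $G$ of $\prod_i F_i(\mathbf{a},t)$ is then a subgroup of $\prod_{i=1}^{r}S_{n_i}$ that projects surjectively onto each factor. The heart of the proof is to upgrade this to the equality $G=\prod_{i=1}^{r}S_{n_i}$, which is precisely the statement that the $r$ splitting fields are linearly disjoint over $k(\mathbf{a})$; only this equality makes the density factor as the clean product $\prod_i 1/n_i$.

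The main obstacle is exactly this linear disjointness. By Goursat's lemma together with the normal-subgroup structure of the symmetric groups (for $n_i\ge 5$ the only proper nontrivial normal subgroup of $S_{n_i}$ is $A_{n_i}$, with $S_{n_i}/A_{n_i}\cong\Z/2$), a subgroup of $\prod_i S_{n_i}$ surjecting onto every factor can fail to be the whole product only if two factors are glued along a common $\Z/2$ quotient. That quotient is the sign character, and it corresponds to a shared quadratic subextension $k(\mathbf{a})\big(\sqrt{\mathrm{disc}_t F_i}\big)=k(\mathbf{a})\big(\sqrt{\mathrm{disc}_t F_j}\big)$. I would therefore compute the discriminants $\mathrm{disc}_t F_i(\mathbf{a},t)$ and verify that they lie in pairwise distinct classes of $k(\mathbf{a})^\times/(k(\mathbf{a})^\times)^2$: that each is a nonsquare is automatic, since $\mathrm{Gal}=S_{n_i}$ contains a transposition (the transposition produced by the Morse argument of Section \ref{gal}), and the pairwise distinctness should follow from the $F_i$ being non-associate primitive quadratic functions, so that their branch data, and hence the square classes of their discriminants, genuinely differ. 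The low-degree cases (notably $S_4$, which carries an additional Klein four-quotient) would need a short separate check, but are controlled by the standing hypotheses on $n$ and $m\ge2$.

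Finally, with $G=\prod_i S_{n_i}$ in hand, I would invoke the multi-polynomial form of the function field Chebotarev theorem exactly as in Entin's Theorem \ref{entbat} and Theorem 3.1 of \cite{anlz}, whose error term $O_{n,r}(q^{-1/2})$ descends from the Lang--Weil estimate underlying Proposition \ref{Theorem2}. The short-interval nature of the problem enters only through the substitution $\mathbf{a}\mapsto\mathbf{A}\in\F_{q}^{m+1}$ parametrizing $I(p,m)$, which is already built into Theorems \ref{th2} and \ref{frob}; no geometric input beyond the disjointness step is required, and this completes the proof of the theorem.
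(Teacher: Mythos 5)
Your overall strategy coincides with the paper's: both proofs reduce Theorem \ref{bat} to showing that the Galois group $G$ of $\prod_i F_i(\mathbf{a},t)$ over $k(\mathbf{a})$ is the full product $S_{n_1}\times\cdots\times S_{n_r}$, both locate the only possible obstruction in the square classes of the discriminants $d_i=\mathrm{disc}_t F_i(\mathbf{a},t)$ in $k(\mathbf{a})^{\times}/k(\mathbf{a})^{\times 2}$, and both then conclude by a Chebotarev-type count (the paper cites Theorem 3.1 of \cite{Baro}; you invoke Theorem \ref{frob} and \cite{anlz} with the density $\prod_i (n_i-1)!/n_i!=\prod_i 1/n_i$ of tuples of full cycles). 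Your Goursat discussion is a more explicit account of why the discriminants control the possible gluing, which the paper leaves implicit.

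However, there is a genuine gap in your group-theoretic step when $r\geq 3$: checking that the $d_i$ lie in \emph{pairwise} distinct square classes is not sufficient to conclude $G=\prod_i S_{n_i}$. The subgroup $\{(\sigma_1,\sigma_2,\sigma_3):\mathrm{sgn}(\sigma_1)\mathrm{sgn}(\sigma_2)\mathrm{sgn}(\sigma_3)=1\}$ of $S_{n_1}\times S_{n_2}\times S_{n_3}$ surjects onto every factor and onto every pair of factors, yet is proper; it corresponds to $d_1d_2d_3$ being a square while all pairwise products are nonsquares. What is needed is the full $\F_2$-linear independence of the classes of $d_1,\dots,d_r$, i.e., that $\prod_{i\in S}d_i$ is a nonsquare for every nonempty $S\subseteq\{1,\dots,r\}$ --- which is how the paper phrases the condition, and which it deduces from the stronger facts that each $d_i$ is a nonsquare and that the $d_i$ are pairwise coprime (pairwise coprimality forces every nonempty subproduct to be a nonsquare). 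Relatedly, your verification of the discriminant condition is only asserted (``the square classes genuinely differ because the $F_i$ are non-associate''); this does not follow from non-associateness alone, since $d_i$ and $d_iu^2$ lie in the same class, and the paper instead argues pairwise coprimality concretely: a common root of $d_i$ and $d_j$ would produce a solution of the system $F'(\rho_i)=F'(\rho_j)=0$, $F(\rho_i)=F(\rho_j)$, which separability excludes. Supplying the full independence statement and an actual proof of coprimality of the discriminants would be required to close the gap.
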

\begin{proof}
 The proof of this Theorem is completed, once we show that, the Galois group $G$ is the full permutation group $S_{n_{1}}\times...\times S_{n_{r}}$
acting on the roots of $F_{i}(\mathbf{a},t)$ over $k(\mathbf{a})$.
From Lemma \ref{lem2}, we see that,  $\mathrm{Gal}(F_i({\mathbf{a}},t), k({\mathbf{a}}))\cong S_{n_i}$. To show that,
the Galois group $G$ is the full permutation group $S_{n_{1}}\times...\times S_{n_{r}}$
we need to show the multiplicative independence of $disc_{t}F_i(t,h)$ modulo squares, i.e.,
$disc_{t}F_{i}(\mathbf{a},t)$ is linearly independent as elements of $k(\mathbf{a})^{\times}/k(\mathbf{a})^{\times{2}}$, nothing but
 $d_i, d_j$ for $i\neq j$ are non squares and are relatively 
prime in the ring $k(\mathbf{a},t)$.
Denoting $d_i=disc_{t}F_{i}(\mathbf{a},t)$, the discriminant of $F_{i}(\mathbf{a},t)$, is considered as a polynomial in $t$.
Discriminant of a
monic separable polynomial $f(t)$ is defined by the resultant of $f \text{ and } f^{\prime}$:
\[disc(f)=\pm Res(f,f^{\prime})=\pm\prod_{j=1}^{\nu}f(\tau_j), \text{ where }f^{\prime}=c\prod_{j=1}^{\nu}(t-\tau_j) \]
Since, $\mathrm{Gal}(F_i({\mathbf{a}},t), k({\mathbf{a}}))$ is a full symmetric group, $d_i$ is not a square in $k(\mathbf{a})$ for any $i$.
If $d_i,d_j$ are not relatively prime in $k({\mathbf{a}})$, then they have a
common root. Thus $d_i,d_j$ having a common root  gives the following system of equations: 
\begin{equation}
 \begin{cases}
   F^{\prime}(\rho_i)=0\\
  F^{\prime}(\rho_j)=0\\
  F(\rho_i)=F(\rho_j)  \text{ for some } \rho_i,\rho_j \in \bar{k}.
  \end{cases}
\end{equation}
But we have seen, this system does not have any solution for any $\rho_i,\rho_j$   in the algebraic closure of $k$ [page 3, \cite{CR}].
Hence, the Galois group $\mathrm{G}=S_{n_{1}}\times...\times S_{n_{r}}$.
Rest of the proof follows from [Theorem 3.1, \cite{Baro}].
 Thus \[\#\{h:=p(t)+\sum\limits_{j=0}^{m}A_jt^j |F_i(\mathbf{A},t)\in \F_{q}[t]\text{ is irreducible for $i=1,2,...,r$} \}\]
\[=
 \big(\prod\limits_{i=1}^{r}\frac{1}{n_i}\big)q^{m+1}(1+O_{n,r}(q^{\frac{-1}{2}}))\]

\end{proof}

\section{M$\ddot{\mathrm{o}}$bius sums  and Chowla's conjecture}
The Mertens function,  given by the partial sums of M$\ddot{\mathrm{o}}$bius function, $M(n):=\sum\limits_{k=1}^{n}\mu(k)$ is of great
importance in Number Theory, where $\mu(k)$ is the Mobius function. For example, the Prime Number Theorem is logically
equivalent to \begin{equation}\sum\limits_{k=1}^{n}\mu(k)=o(n)\end{equation} and 
\begin{equation}\sum\limits_{k=1}^{n}\frac{\mu(k)}{k}=o(1)\end{equation} 
the Riemann Hypothesis is equivalent to 
\begin{equation}M(n)=O(n^{\frac{1}{2}+\epsilon})\text{ for all }\epsilon>0.\end{equation}
Thus one can say $M(n)$ is said to demonstrate square root cancellation.
Keating and Rudnick have established the function field version of square root cancellation of Mobius sums in short intervals \cite{KZ}.
Carmon and Rudnick have resolved the function field version of the conjecture of Chowla on auto-corelation of Mobius function \cite{CR} 
over large finite field and proved the following result.
  For $r,n\geq 2$, distinct polynomials $\alpha_1,...,\alpha_r\in\F_{q}[X]$ of degree
smaller than $n, q$ odd, and $(\epsilon_1,...,\epsilon_r)\in\{1,2\}$ not all even, 
\begin{equation}
 \sum_{deg F=n}\mu(F+\alpha_1)^{\epsilon_1}\cdot\cdot\cdot\mu(F+\alpha_1)^{\epsilon_r}=O(rnq^{n-\frac{1}{2}})
\end{equation}
We show that, there is square root cancellation in Mobius sums, as well as in 
the auto-correlation type sums appearing in Chowla's conjecture for the function $F(x,t)=f(t)+g(t)x^2$  in the short interval of the form $I(p,m)$
in large finite field (equation \ref{maineq1}).
\newline
For polynomials over a finite field $\F_q$, the Mobius function of a nonzero polynomial $f\in \F_{q}[x]$  is defined
to be $\mu(F)=(-1)^{r}$ if $F=cP_1,...,P_r$
with $0\neq c\in \F_{q}$ and $P_1,...,P_r$ are distinct monic irreducible polynomials, and $\mu(F)\neq 0$ otherwise.
The analogue of the full  sum $M(n)$ is the sum over all monic polynomials $\mathcal{M}_n$ of given degree $n$, for which 
we have \[
         \sum_{f\in \mathcal{M}_n}\mu(f) =
         \begin{cases}
          1, & n=0\\
          -q &n=1\\
          0, &n\geq 2
         \end{cases}
        \]
 
 \begin{equation}
 \text{ Set } S_{\mu}(p;m):=\sum\limits_{h\in I(p,m)}\mu(f+gh^2)
 \end{equation}
  In the next Theorem, we demonstrate 
square root cancellation in Mobius sums is equivalent to square root cancellation in auto correlation of Mobius sums
 in the short interval  $I(p,m)$ in the larger finite field 
  limit $q\rightarrow \infty$ and  $\mathrm{deg}(p)$ fixed.
 \begin{theorem}\label{th-mobi-sum}
 \begin{enumerate}
  \item 
 Let $F(\mathbf{a},t)$ satisfy the conditions of Propisition \ref{prop1} and   $\mathrm{deg}F(\mathbf{a},t)=n$. Then  for $m\geq 1$  
 \begin{equation}\label{mobsum}
 \big|S_{\mu}(p,m)\big| \ll_{n}\frac{\#I(p,m)}{\sqrt{q}}\end{equation} and the implied constant depending only on $n=\mathrm{deg}(F)$.
 \item Let each of $F_i$ of $\mathrm{max}$ $\mathrm{ deg} n_i$ satisfy all conditions of
  Proposition \ref{prop1}. Then for $\epsilon_{i}\in\{1,2\}$,
  not all even,
  \begin{equation}\label{autocho}
\Big|\sum\limits_{h\in I(p,m)} \mu(F_1(\mathbf{a},t))^{\epsilon_1}...\mu(F_r(\mathbf{a},t))^{\epsilon_r}\Big|\ll_{r,\mathrm{deg}F_i}\frac{\#I(p,m)}{\sqrt{q}}
 \end{equation}
 \end{enumerate}

 \end{theorem}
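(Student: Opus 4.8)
The plan is to convert each M\"obius factor on the squarefree locus into the sign of its Frobenius permutation, and then to feed the resulting character sum into the equidistribution statement of Theorem \ref{frob}, whose main term will be seen to vanish identically. First I would record the elementary identity underlying everything: if $F(\mathbf{A},t)\in\F_q[t]$ is squarefree of degree $n$ and $\Theta(F(\mathbf{A},t))\in S_n$ is its Frobenius conjugacy class, then a polynomial with $k$ distinct irreducible factors corresponds to a permutation with $k$ cycles, so $\mu(F(\mathbf{A},t))=(-1)^k=(-1)^n\,\mathrm{sgn}(\Theta(F(\mathbf{A},t)))$. The specializations $\mathbf{A}$ for which $F(\mathbf{A},t)$ fails to be squarefree lie on the vanishing locus of $\mathrm{disc}_t F$, a proper subvariety, and hence number at most $O_n(q^m)$; since $|\mu|\le 1$, they contribute $O_n(q^m)=O_n(\#I(p,m)/q)$ and are absorbed into the claimed bound.

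For part (1), restricting to the squarefree locus and grouping by conjugacy class $C\subset S_n$, I would write
\[ S_\mu(p,m)=(-1)^n\sum_{C\subset S_n}\mathrm{sgn}(C)\,\#\{h\in I(p,m):\Theta(F(\mathbf{A},t))=C\}+O_n(q^m). \]
Applying Theorem \ref{frob} with $G=S_n$ replaces each count by $\tfrac{|C|}{n!}q^{m+1}(1+O_n(q^{-1/2}))$, so the leading term is $q^{m+1}\cdot\tfrac{1}{n!}\sum_{\sigma\in S_n}\mathrm{sgn}(\sigma)$. For $n\ge 2$ the symmetric group has equally many even and odd permutations, so this average is zero and only the error survives, giving $|S_\mu(p,m)|\ll_n q^{m+1/2}=\#I(p,m)/\sqrt{q}$.

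For part (2), I would read $\mu(F_j)^2$ as the indicator that $F_j(\mathbf{A},t)$ is squarefree and set $S=\{i:\epsilon_i=1\}$, which is nonempty since the $\epsilon_i$ are not all even. On the locus where every $F_i(\mathbf{A},t)$ is squarefree the product collapses to $\prod_{i\in S}\mu(F_i(\mathbf{A},t))=\big(\prod_{i\in S}(-1)^{n_i}\big)\prod_{i\in S}\mathrm{sgn}(\Theta(F_i(\mathbf{A},t)))$, and the complementary locus again contributes $O(q^m)$. The decisive input is the product structure $G=S_{n_1}\times\cdots\times S_{n_r}$ established in Theorem \ref{bat}, which makes the tuple $(\Theta(F_1),\ldots,\Theta(F_r))$ equidistribute over $\prod_i S_{n_i}$ by the product form of the function-field Chebotarev theorem. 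The average of $\prod_{i\in S}\mathrm{sgn}(\sigma_i)$ then factors as $\prod_{i\in S}\big(\tfrac{1}{n_i!}\sum_{\sigma\in S_{n_i}}\mathrm{sgn}(\sigma)\big)=0$, every factor vanishing because $n_i\ge 2$ and $S\neq\emptyset$. Hence the main term is again zero and the surviving error $O_{r,\deg F_i}(q^{m+1/2})$ yields the asserted cancellation.

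The main obstacle will be bookkeeping rather than conceptual: one must confirm that the multiplicative independence of the discriminants $\mathrm{disc}_t F_i$ modulo squares, proved in the course of Theorem \ref{bat}, genuinely forces the combined Galois group to be the full product $\prod_i S_{n_i}$, since it is precisely this independence that makes the signs $\mathrm{sgn}(\Theta(F_i))$ behave as independent characters and lets the average factor. Once the product structure is secured, the vanishing of the mean of $\mathrm{sgn}$ over each $S_{n_i}$ does all the work, and the equidistribution error of Theorem \ref{frob} supplies exactly the square-root saving.
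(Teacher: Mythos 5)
Your argument is correct in substance, but it follows a genuinely different route from the paper. The paper converts $\mu$ into a quadratic character of the discriminant via Pellet's formula $\mu(F)=(-1)^{\deg F}\chi_2(\mathrm{disc}(F))$ (citing \cite{kc1}), isolates the constant coefficient $b=F(0)$, and applies Weil's one-variable bound $\big|\sum_{b\in\F_q}\chi_2(D_F(b))\big|\le(\deg D_F-1)\sqrt q$ to the degree-$(n-1)$ polynomial $D_F(b)=\mathrm{disc}(\tilde F(\mathbf a,t)+b)$, bounding the remaining $m$ variables trivially; this is the Carmon--Rudnick method \cite{CR}, it needs only Proposition \ref{prop1} (so that $D_F(b)$ is not proportional to a square) and the Riemann Hypothesis for curves, and it never invokes the $S_n$ computation. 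You instead use the Galois-theoretic face of the same Stickelberger relation, $\mu(F(\mathbf A,t))=(-1)^n\,\mathrm{sgn}(\Theta(F(\mathbf A,t)))$ on the squarefree locus, feed it into the explicit Chebotarev statement (Theorem \ref{frob}), and observe that the main term dies because $\sum_{\sigma\in S_n}\mathrm{sgn}(\sigma)=0$ for $n\ge 2$; the $O_n(q^{-1/2})$ equidistribution error, summed over the $O_n(1)$ conjugacy classes, then gives exactly the claimed saving. Your route is more conceptual and reuses Theorem \ref{th2}, but it is also heavier: it requires the full Galois group identification (and, for part (2), a joint version of Theorem \ref{frob} over the product group $S_{n_1}\times\cdots\times S_{n_r}$, which the paper never states explicitly -- you would need to extract it from Theorem 3.1 of the Andrade--Bary-Soroker--Rudnick reference after the product Galois group is established as in Theorem \ref{bat}), whereas the paper's character-sum argument is self-contained at this step and yields the explicit constant $(n-2)$. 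One small economy you missed: the non-squarefree specializations need no separate $O_n(q^m)$ bookkeeping in part (1), since $\mu$ vanishes there and Theorem \ref{frob} already absorbs the deficit $q^{m+1}-\sum_C\#\{\cdot\}$ into its error term.
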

 \begin{proof}
   Mobius function $\mu(F)$ can be computed in terms of the discriminant $\mathrm{disc}(F)$ of $F(x)$ as (see \cite{kc1})
\(\mu(F)=(-1)^{\mathrm{deg} F}\chi_{2}(\mathrm{disc}(F)), \text{ where } \chi_{2}\)
is the quadratic character on $\F_{q}. $ 
Then   equation (\ref{mobsum}) becomes,
\begin{equation}\label{disc-char}
S_{\mu}(p;m):=(-1)^{n}\sum\limits_{h\in I(p,m)}\chi_{2}(\mathrm{disc}(f(t)+g(t)h^2))\end{equation}
To solve the equation (\ref{disc-char}), we follow the method as in \cite{CR}.
Since, $\mathrm{disc}(F)$ is polynomial in the coefficients of $F$, equation  (\ref{disc-char}) is an ${m+1}$ dimensional character sum,
the equation is evaluated by 
bounding all but one variable.   
writing, 
\begin{equation}
\begin{split}
&F(\mathbf{a},t)=\tilde F(\mathbf{a},t)+b\\
      \text{Set }  &D_{F}(b):=disc(\tilde F(\mathbf{a},t)+b) 
      \end{split}
         \end{equation}Here, $b:=F(0)$ of $F(\mathbf{a},t)=f(t)+g(t)h^{2}$
which is a polynomial of degree $(n-1)$ in $b$. Therefore we have 
\begin{equation}
 S_{\mu}(p;m) \leq \sum\limits_{\mathbf{A}\in \F_{q}^{m}}\Big|\sum\limits_{b\in \F_q}\chi_{2}(D_{F}(b))\Big|
\end{equation}
By Weil's theorem (the Riemann Hypothesis for curves over a finite field), which implies that for a polynomial $P(t)\in \F_{q}[t]$ of positive degree,
which is not proportional to a square of another polynomial (see \cite{CR})
\begin{equation}\label{cal-mobi}
 \Big|\chi_{2}(P(t))\Big| \leq(\mathrm{deg}P-1)\sqrt{q},\quad  P(t)\neq cH^{2}(t)
\end{equation}
Proposition \ref{prop1} implies $D_{F}(b)$ is not a square.
Non vanishing of $\chi_{2}(D_{F}(b))$ follows from the fact that, $F(x,t)$ is separable in $x$.  
Hence, we have
 \begin{equation}
  S_{\mu}(p;m) \leq (n-2)q^{m+\frac{1}{2}}
 \end{equation}
The implied constant depends only on $n=\mathrm{deg}F(\mathbf{a},t)$.
 
Proof of equation (\ref{autocho}) is based on the similar techniques used in proving equation(\ref{mobsum}) of  Theorem \ref{th-mobi-sum}. 
  Therefore we have from equation (\ref{cal-mobi})
$$
  \sum\limits_{{\mathbf{A}\in\F_{q}^{m}}}\Big|\sum\limits_{b\in{\F_q }}\chi_2(D_{F_i}(b)^{\epsilon_1}\cdot\cdot\cdot D_{F_r}(b)^{\epsilon_r})\Big|\leq 
  \Big(2\sum\limits_{i=1}^{r}(n_i-1)-1\Big)q^{m+\frac{1}{2}}
$$
 which is clearly,
 $$\sum\limits_{{\mathbf{A}\in\F_{q}^{m}}}\Big|\sum\limits_{b\in{\F_q }}\chi_2(D_{F_i}(b)^{\epsilon_1}\cdot\cdot\cdot D_{F_r}(b)^{\epsilon_r})\Big|
 \ll_{deg{F_i},r}\frac{\#I(p,m)}{\sqrt{q}}$$
 Hence, we can conclude that  square root cancellation in Mobius sums is equivalent to square root cancellation in Chowla type  sums.
 \end{proof}
  
 \section{Acknowledgement}
 The author would like to thank Prof. Soumya Das, Department of Mathematics, IISc, Bangalore, for suggesting the problems 
 and for many helpful discussions and suggestion while writing the manuscript.
 The author also would like to thank MathStack Exchange and MathOverflow Community.
 This research work was supported by the Department of Science and Technology, Government of India,  under the
 Women Scientist Scheme [SR/WOS-A/PM-31-2016].

 \bibliographystyle{plain}

\end{document}